\documentclass[a4paper,11pt]{article}
\usepackage{bm}
\usepackage{amsmath}
\usepackage{amsthm}
\usepackage{wasysym}
\usepackage{amssymb}
\usepackage{amsfonts}
\usepackage{graphicx}
\usepackage[english]{babel}
\usepackage[utf8]{inputenc}
\usepackage[T1]{fontenc}
\usepackage{mathrsfs}
\usepackage{enumerate}
\usepackage{version}
\usepackage{calc}
\usepackage{subfigure}
\usepackage{authblk}
\usepackage{comment}

 \usepackage{pstricks,pst-math,pst-xkey}
 \usepackage{float}
 \usepackage{indentfirst}
\usepackage[pagewise]{lineno} 
\bibliographystyle{plain}

\newtheorem{definition}{Definition}[section]
\newtheorem{theorem}[definition]{Theorem}
\newtheorem{lemma}[definition]{Lemma}
\newtheorem{corollary}[definition]{Corollary}
\newtheorem{proposition}[definition]{Proposition}
\newtheorem{remark}[definition]{Remark}

\newtheorem{example}[definition]{Example}



\setlength{\hoffset}{-1in}
\setlength{\voffset}{-1in}

\setlength{\topmargin}{1.5cm}
\setlength{\headheight}{1cm}
\setlength{\headsep}{0.5cm}
\setlength{\topskip}{0cm}

\setlength{\oddsidemargin}{2.5cm}
\setlength{\evensidemargin}{2cm}

\setlength{\footskip}{1cm}

\setlength{\textwidth}{16.5cm}
\setlength{\textheight}{24cm}


\newcommand{\R}{{\mathbb R}}

\newcommand{\ve}{{\varepsilon}}

\def\a{\alpha}

\def\s{\sigma}
\def\ve{\varepsilon}

\def\f{\varphi}

\def\t{\tau}

\def\F{\mathcal{F}}
\def\B{\mathcal{B}}

\def\f{\varphi}

\def\R{\mathbb{R}}
\def\E{\mathbb{E}}

\def\L{\mathcal{L}_2^0}

\begin{document}

\title{Invariant Measure for Neutral Stochastic Functional Differential Equations with Non-Lipschitz Coeffiecients}

\author[1]{ Andriy Stanzhytskyi\thanks{andrew.stanj@gmail.com}}

\author[2]{Oleksandr Stanzhytskyi \thanks{ostanzh@gmail.com}}

\author[3]{Oleksandr Misiats\thanks{omisiats@vcu.edu}}

\affil[1]{Department of Physics and Mathematics, Igor Sikorsky Kyiv Polytechnic Institute, Ukraine}

\affil[2]{Department of Mathematics,
Taras Shevchenko National University of Kyiv, Ukraine}

\affil[3]{Department of Mathematics, Virginia Commonwealth University,
Richmond, VA, 23284, USA}


\maketitle

\begin{abstract}
In this work we study the long time behavior of nonlinear stochastic functional-differential equations of neutral type in Hilbert spaces with non-Lipschitz nonlinearities. We establish the existence of invariant measures in the shift spaces for such equations. Our approach is based on Krylov-Bogoliubov theorem on the tightness of the family of measures. 


\end{abstract}
\section{Introduction}
In this work we study the asymptotic behaviour of the solutions of neutral type stochastic functional-differential equations of the form
\begin{align}\label{eq:1.1}
& d[u(t)+g(u_t)] = [Au + f(u_t)] dt + \sigma(u_t) d W(t) \text{ for }  t>0; &\\
\nonumber & u(t)= \f(t), t \in [-h,0), \ h>0.
\end{align}
Here $A$ is an inifinitesimal generator of a strong continuous semigroup $\{S(t), t\geq 0\}$ of bounded linear operators in real separable Hilbert space $H$. The noise $W(t)$ is a $Q$-Wiener process on a separable Hilbert space $K$. For any $h>0$ denote $C_h := C([-h,0],H)$ to be a space of continuous $H$-valued functions $\varphi:[-h,0] \to H$, equipped with the norm 
\[
\|\varphi\|_{C_h}:= \sup_{t \in [-h,0]} \|\varphi(t)\|_H,
\]
where $\|\cdot\|_H$ stands for the norm in $H$. Throughout this paper, $\|\cdot\|_H$ will be denoted with $\|\cdot\|$. The solution $u(t)$ of (\ref{eq:1.1}) is sometimes referred as a {\it state process}. We also denote $u_t := u(t+ \theta)$, where $\theta \in [-h,0)$. The functionals $f$ and $g$ map $C_h$ to $H$, and $\sigma: C_h \to \mathcal{L}_2^0$, where $\mathcal{L}_2^0 = \mathcal{L}(Q^{1/2}K,H)$ is the space of Hilbert-Schmidt operators from $Q^{1/2}K$ to $H$. Finally, $\varphi:[-h,0] \times \Omega \to H$ is the initial condition, where $(\Omega, \mathcal{F},P)$ is the probability space.

Differential equations of neutral type arise in modeling the memory effects in various dynamical processes (e.g. heat conduction). While the classic heat equation $u_t = \Delta u$  accurately describes the heat transfer
by conduction in many materials, this model has two important drawbacks. First, the classic heat equation does not take into account the memory effects which may be present in some materials, especially at low temperatures. Most importantly, the  heat equation predicts an unrealistic result: that a
thermal disturbance at one point of the body is instantly felt everywhere in the body,
although not uniformly. In other words in Fourier heat conductors, finite thermal discontinuities must propagate with infinite speed. These observations lead one to
believe that in materials with memory, the Fourier's law of heat conduction may be an approximation (perhaps for sufficiently steady temperature fields) to a more general nonlinear constitutive assumption relating the heat flux to the material's thermal history. Gurtin and Pipkin \cite{2} have proposed a memory theory of heat
conduction which is independent of the present value of the temperature gradient, generalizes constitutive relations deduced from kinetic theory by Maxwell \cite{32}
and Cattaneo \cite{31}, and has finite heat propagation speeds. The linearized version of Gurkin-Pipkin's model reads as
\[
\dot{u}(x,t) + \int_{0}^{\infty}\beta(s) \dot{u}(x,t - s) \,ds = C \Delta u(x,t),
\]
which is a particular example of  a neutral type differential equation. Similar memory effects emerge in Hodgkin-Huxley model, Dawson-Fleming model of population genetics \cite{5} and other models.

The natural first step in the analysis of  (\ref{eq:1.1}) is to establish the existence and uniqueness of solutions and their continuous dependence on the initial data. The question of well-posedness was addressed, e.g. in \cite{6} under the Lipschitz conditions. This condition was somewhat relaxed in \cite{7}. The work \cite{33} addressed similar questions for the equations with impulsive action, as well as with infinite delays. However, in our opinion, the proof of the main result in aforementioned works is not quite complete. The reason is that this proof is based on a successive approximation scheme, which is implicit. In particular, the existence of a solution at each iteration step is highly nontrivial and requires a separate proof. 

On the other hand, \cite{8} correctly establishes the existence and uniqueness of mild solution under the Lipschits and linear growth assumptions. Under the similar condition the works \cite{9, 10} establish the existence and uniqueness of solutions if the equation is perturbed with Brownian motion, as well as fractional Brownian motion. However, the continuous dependence on the initial conditions was not established there. 

To the best of our knowledge, the question of the existence of invariant measures for nonlinear equations of type (\ref{eq:1.1}) has not been addressed in literature. One exception is the work \cite{11}, which established the existence of a stationary solution for a linear equation of type (\ref{eq:1.1}). 

The main goal of this paper is to establish the existence of invariant measure for the equation (\ref{eq:1.1}) using Krylov-Bogoliubov theory on the tightness of a family of measures \cite{12}. More precisely, we will implement the compactness approach of Da Prato and Zabczyk \cite{13}. This approach was used in establishing the existence of invariant measure for a large class of stochastic partial differential equations with no delay \cite{14,15,16,17}. The results on long time behavior of ordinary differential equations of similar nature were obtained in \cite{29,30}. The existence of periodic solutions for neutral functional differential equations in deterministic case were derived in \cite{34}. Analogous results for non-neutral type stochastic functional differential equations were established in \cite{18}. 

In order to apply the compactness approach, we need to establish that the solutions are both Markovian and Feller. These properties were established for (\ref{eq:1.1}) in \cite{19} in case when $A$ is Laplacian. The analog of comparison theorem for such equations was established in \cite{20}. 

The paper is organized as follows. In Section \ref{sec2} we introduce the necessary notation and preliminary results, as well as formulate the main results. Section \ref{sec3} is devoted to the proof of the main result. In Section \ref{sec4} we discuss the application of the main results of the paper. 

\section{Preliminaries and Main Results}\label{sec2}

Throughout this paper $H$ and $K$ are Hibert spaces with norms $\| \cdot \|$ and $\| \cdot \|_K$. Let $(\Omega, \mathcal{F}, P)$ be a complete probability space, and $Q$ be linear bounded covariance operator  such that $tr(Q) < \infty.$ Introduce
\[
W(t):= \sum_{i=1}^{\infty} \sqrt{\lambda_i}\beta_i(t) e_i(x), \ t\geq 0,
\]
which is a $Q$-Wiener process on $t \geq 0$. Here $\beta_i(t)$ are standard, one dimensional, independent Wiener processes, $\{e_k, k\geq 1\}$ is an orthonormal system in $K$, and a sequence of nonegative real numbers $\lambda_k$ satisfying 
\[
Q e_k = \lambda_k e_k, k = 1,2,...
\] 
and
\[
\sum_{i=1}^{\infty} \lambda_i < \infty.
\]
Also let $\{F_{t}, t \geq 0\}$ be a normal filtration satisfying
\begin{itemize}
    \item $W(t)$ is $\mathcal{F}_t$-measurable;
    \item $W(t+h)-W(t)$ is independent of $\mathcal{F}_t$ $\forall h \geq 0, t\geq 0$.
\end{itemize}
As defined in the introduction, let
\[
\mathcal{L}_2^0 = \mathcal{L}_2(Q^{1/2}K,H)
\]
be the space of all Hilbert Schmidt operators from $Q^{1/2}K$ to $H$ with the dot product 
\[
\langle{\Phi, \Psi\rangle}_{\mathcal{L}_2^0} = {\rm tr}(\Phi Q \Psi^*).
\]
Let $A$ be an infinitesimal generator of an analytic semigroup $\{S(t), t \geq 0\}$ in $H$. 

{\bf Condition (H1).} If $\sigma(-A) \text{ is the spectrum of } (-A)$, we have
\[
 {\rm Re} \ \sigma(-A) > \delta >0, \ \text{ and } \ A^{-1} \text{ is compact in } H.
\]
It follows from \cite{21} that for $0 \leq \alpha \leq 1$ one can define the fractional power $(-A)^{\alpha}$, which is a closed linear operator with domain $D(-A)^{\alpha}$. Denote $H_{\alpha}$ to be the Banach space $D(-A)^{\alpha}$ endowed with the norm 
\[
\|u\|_{\alpha} :=\|(-A)^{\alpha} u\|,
\]
 which is equivalent to the graph norm of $(-A)^{\alpha}$. This way $H_0 = H$. It follows from \cite{22}, Sec. 1.4, that if $A^{-1}$ is compact, then $S(t)$ is compact for $t>0$.  Next, it follows from \cite{21}, Th. 3.2, p.48 that under the assumption {\bf (H1)} the semigroup $S(t)$ is continuous with respect to uniform operator topology for $t>0$. Thus, using  \cite{21}, Th. 3.3, p.48 we may conclude that the operator $A$ has a compact resolvent. 
 Consequently, using \cite{22}, Th. 1.4.8, we have the following result:
 \begin{lemma}\label{L:2.1} 
 Under the condition {\bf (H1)} the embedding $H_{\alpha} \subset H_{\beta}$ is compact if $0 \leq \beta < \alpha \leq 1$.
 \end{lemma}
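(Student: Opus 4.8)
The plan is to reduce the compactness of the embedding $H_\alpha \hookrightarrow H_\beta$ to the compactness of a single negative fractional power of $-A$ acting on $H$, and then to establish the latter via the integral representation of fractional powers. Set $\gamma := \alpha - \beta \in (0,1]$. For $u \in H_\alpha$ one has $\|u\|_\beta = \|(-A)^\beta u\| = \|(-A)^{-\gamma}(-A)^\alpha u\|$, and since the definition of the $H_\alpha$-norm makes $(-A)^\alpha : H_\alpha \to H$ and $(-A)^\beta : H_\beta \to H$ isometric isomorphisms, the inclusion $\iota : H_\alpha \to H_\beta$ factors as $\iota = (-A)^{-\beta}\,(-A)^{-\gamma}\,(-A)^{\alpha}$. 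Consequently $\iota$ is compact if and only if the bounded operator $(-A)^{-\gamma} : H \to H$ is compact, which recasts the problem as a statement on the single space $H$.

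Next I would invoke the representation of the negative fractional power,
\[
(-A)^{-\gamma} = \frac{1}{\Gamma(\gamma)} \int_0^\infty t^{\gamma-1} S(t)\, dt,
\]
valid because $-A$ generates the analytic semigroup $\{S(t)\}$ and, by {\bf (H1)}, $0$ lies in the resolvent set of $A$. Two structural consequences of {\bf (H1)} drive the argument. First, the spectral bound $\mathrm{Re}\,\sigma(-A) > \delta$ together with analyticity gives an exponential estimate $\|S(t)\| \le M e^{-\omega t}$ for some $M \ge 1$ and $0 < \omega < \delta$, which makes the integral absolutely convergent in operator norm: $t^{\gamma-1}$ is integrable near $0$ since $\gamma > 0$, and the exponential decay controls the tail. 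Second, because $A^{-1}$ is compact, $S(t)$ is compact for every $t > 0$, a fact already recorded above from \cite{22}.

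To conclude compactness I would truncate away from the origin. For $\varepsilon > 0$ set $R_\varepsilon := \frac{1}{\Gamma(\gamma)}\int_\varepsilon^\infty t^{\gamma-1} S(t)\,dt$. On $[\varepsilon,\infty)$ the integrand takes compact values, is continuous in the operator norm, and is dominated by an integrable scalar function, so $R_\varepsilon$ is an operator-norm limit of Riemann sums of compact operators and is therefore compact. Moreover
\[
\big\|(-A)^{-\gamma} - R_\varepsilon\big\| \le \frac{1}{\Gamma(\gamma)}\int_0^\varepsilon t^{\gamma-1}\,\|S(t)\|\, dt \le \frac{M}{\Gamma(\gamma)}\int_0^\varepsilon t^{\gamma-1}\, dt = \frac{M}{\Gamma(\gamma)}\,\frac{\varepsilon^{\gamma}}{\gamma} \longrightarrow 0
\]
as $\varepsilon \to 0^+$. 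Thus $(-A)^{-\gamma}$ is a uniform limit of compact operators and hence compact, which by the first step gives the compact embedding $H_\alpha \subset H_\beta$.

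The main obstacle is the singularity of the weight $t^{\gamma-1}$ at $t=0$, where $S(t)$ degenerates to the identity and is no longer compact; the truncation estimate above is precisely what circumvents it, by showing the near-zero contribution is negligible in norm. The remaining ingredients — the isometric identification in the reduction step and the norm convergence of the integral — are routine once the two consequences of {\bf (H1)}, namely exponential decay of $\|S(t)\|$ and compactness of $S(t)$ for $t>0$, are in hand. (Alternatively, one may simply cite \cite{22}, Th.~1.4.8, as the self-contained statement, but the argument above exhibits exactly why the hypothesis {\bf (H1)} is the right one.)
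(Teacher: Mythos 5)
Your proof is correct and follows essentially the same route as the paper: the paper obtains Lemma~\ref{L:2.1} by observing that {\bf (H1)} gives $A$ a compact resolvent (equivalently, $S(t)$ compact for $t>0$) and then citing \cite{22}, Th.~1.4.8, and your argument is precisely the standard proof of that cited theorem --- factor the embedding through $(-A)^{-(\alpha-\beta)}$, use the integral representation $\frac{1}{\Gamma(\gamma)}\int_0^\infty t^{\gamma-1}S(t)\,dt$, and approximate in operator norm by the compact truncations away from $t=0$. All the ingredients you rely on (compactness of $S(t)$ for $t>0$ and the exponential bound of Lemma~\ref{L:2.2}) are already recorded in the paper's preliminaries, so your write-up is a correct, self-contained expansion of the paper's citation.
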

   \begin{lemma}\label{L:2.2} (\cite{22}, Th. 1.4.3) Under the condition (H1), for every $\alpha \geq 0$ there exists $C_\alpha>0$ such that
   \[
   \|(-A)^{\alpha} S(t) \| \leq C_\alpha t^{-\alpha} e^{-\delta t}, t > 0.
   \]
In particular,
\begin{equation}\label{eq:exp_est}
  \|S(t) \| \leq C_0 e^{-\delta t}, t > 0.
  \end{equation}
  \end{lemma}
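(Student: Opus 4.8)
The plan is to treat this as the standard fractional-power estimate for an analytic semigroup whose generator has a spectral gap, reducing everything to the two endpoint cases $\alpha=0$ and $\alpha=1$ and then filling in the range $0<\alpha<1$ by interpolation and the range $\alpha>1$ by the semigroup property.

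First I would extract the quantitative consequence of \textbf{(H1)}. Since $A$ generates an analytic semigroup and, by \textbf{(H1)}, $\sigma(A)=-\sigma(-A)\subset\{\operatorname{Re}\lambda<-\delta\}$, the operator $A$ is sectorial and its resolvent set contains a sector straddling the imaginary axis with vertex at $\operatorname{Re}\lambda=-\delta$, on which $\|(\lambda I-A)^{-1}\|$ admits the usual $O(1/|\lambda+\delta|)$ bound. Writing the semigroup through the Dunford representation
\[
S(t)=\frac{1}{2\pi i}\int_\gamma e^{\lambda t}(\lambda I-A)^{-1}\,d\lambda,
\]
where $\gamma$ is a contour inside this sector with vertex on the line $\operatorname{Re}\lambda=-\delta$, the factor $e^{-\delta t}$ comes out of $|e^{\lambda t}|$ uniformly on $\gamma$, while the slanted rays provide absolute convergence. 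This already gives the case $\alpha=0$, i.e. $\|S(t)\|\le C_0e^{-\delta t}$ (the "in particular" claim). Differentiating under the integral, $AS(t)=\frac{1}{2\pi i}\int_\gamma \lambda e^{\lambda t}(\lambda I-A)^{-1}\,d\lambda$, and the scaling $\lambda\mapsto\lambda/t$ together with the resolvent bound yields the endpoint $\|(-A)S(t)\|\le C_1\,t^{-1}e^{-\delta t}$.

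Second, for $0\le\alpha\le1$ I would invoke the moment (interpolation) inequality for fractional powers, which is available because $0\in\rho(-A)$ by \textbf{(H1)}: for $x\in D(-A)$ one has $\|(-A)^\alpha x\|\le c_\alpha\|(-A)x\|^\alpha\|x\|^{1-\alpha}$. Applying this to $x=S(t)y$ (legitimate since $S(t)$ maps $H$ into $D((-A))$ for $t>0$) and using the two endpoint bounds gives
\[
\|(-A)^\alpha S(t)\|\le c_\alpha\,(C_1t^{-1}e^{-\delta t})^\alpha(C_0e^{-\delta t})^{1-\alpha}=C_\alpha\,t^{-\alpha}e^{-\delta t},
\]
where the exponentials combine exactly because $e^{-\delta t\alpha}e^{-\delta t(1-\alpha)}=e^{-\delta t}$. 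This settles $0\le\alpha\le1$. For general $\alpha>1$ I would write $\alpha=n+\beta$ with $n\in\N$, $0\le\beta<1$, split $S(t)=S\!\left(\tfrac{t}{n+1}\right)^{n+1}$ by the semigroup property, and distribute the power as $(-A)^\alpha S(t)=\big[(-A)^\beta S(\tfrac{t}{n+1})\big]\prod_{j=1}^{n}\big[(-A)S(\tfrac{t}{n+1})\big]$, using that $(-A)^\gamma$ commutes with $S(\cdot)$ on the relevant domains and the additivity $(-A)^\beta(-A)^n=(-A)^{n+\beta}$. Estimating each of the $n+1$ bounded factors by the already-proved cases and multiplying reproduces $t^{-\alpha}e^{-\delta t}$ up to the constant $(n+1)^{n+\beta}C_\beta C_1^{\,n}=:C_\alpha$.

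I expect the main obstacle to be the first step: converting the purely qualitative hypotheses in \textbf{(H1)} (analyticity of $S(t)$ plus the spectral gap $\operatorname{Re}\sigma(-A)>\delta$) into the sharp quantitative resolvent estimate on a sector whose vertex sits on $\operatorname{Re}\lambda=-\delta$, and then performing the contour scaling that simultaneously produces the factor $t^{-1}$ and the exponential $e^{-\delta t}$ for the endpoint $\alpha=1$. Once these endpoints and the moment inequality are secured, the interpolation and the semigroup-splitting argument are routine bookkeeping. In the paper itself this lemma is simply quoted from \cite{22}, Th.~1.4.3.
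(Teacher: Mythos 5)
Your argument is correct, and it is essentially the standard proof of the result the paper merely cites without proof (\cite{22}, Th.~1.4.3): shift the Dunford contour to extract $e^{-\delta t}$ and get the endpoints $\alpha=0,1$, interpolate via the moment inequality for $0<\alpha<1$, and split the semigroup for $\alpha>1$. The one point you rightly flag as the obstacle --- upgrading the qualitative spectral gap $\operatorname{Re}\sigma(-A)>\delta$ to a uniform resolvent bound on a sector with vertex on $\operatorname{Re}\lambda=-\delta$ --- is handled in the standard way by noting that the offending region (the intersection of the original sector with $\{\operatorname{Re}\lambda\ge-\delta\}$) is compact and spectrum-free, so the resolvent is bounded there and the contour can be deformed.
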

  \begin{lemma}\label{L:2.3}(\cite{13})
  Let $p>2$, $T>0$ and let $\Phi$ be an $\mathcal{L}_2^0$ valued, predictable process such that 
  \[
  \mathbb{E}  \int_0^T \|\Phi(t)\|^p_{\mathcal{L}_2^0} dt < \infty.
   \]
   Then there is a constant $M_T>0$ such that \[
   \mathbb{E} \sup_{t \in [0,T]} \left\|\int_0^t S(t-s) \Psi(s) d W(s) \right\|^p \leq M_T \mathbb{E} \int_0^T \| \Psi(s)\|^p_{\mathcal{L}_2^0} ds.
   \]
     \end{lemma}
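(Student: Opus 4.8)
Since this is exactly the Da Prato--Zabczyk maximal inequality for stochastic convolutions, the plan is to reproduce the \emph{factorization method}. Throughout I write $Z(t):=\int_0^t S(t-s)\Psi(s)\,dW(s)$ and read the integrability hypothesis as referring to this same integrand $\Psi$. The starting point is to fix an exponent $\alpha$ with $1/p<\alpha<1/2$; this interval is nonempty precisely because $p>2$, which is where the hypothesis on $p$ enters. Recalling the Beta-integral identity
\[
\int_s^t (t-r)^{\alpha-1}(r-s)^{-\alpha}\,dr=\frac{\pi}{\sin(\pi\alpha)}, \qquad 0\le s<t,
\]
and introducing the auxiliary process
\[
Y(r):=\int_0^r (r-s)^{-\alpha}S(r-s)\Psi(s)\,dW(s),
\]
I would use the stochastic Fubini theorem together with the semigroup identity $S(t-r)S(r-s)=S(t-s)$ to rewrite
\[
Z(t)=\frac{\sin(\pi\alpha)}{\pi}\int_0^t (t-r)^{\alpha-1}S(t-r)Y(r)\,dr.
\]
This reduces the claim to two steps: (i) an $L^p$ bound for $Y$ over $\Omega\times[0,T]$, and (ii) a purely deterministic bound for the resulting convolution operator.

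For step (i), I would apply the Burkholder--Davis--Gundy inequality (the non-maximal $L^p$ bound for Hilbert-space valued stochastic integrals) to $Y(r)$, obtaining
\[
\mathbb{E}\|Y(r)\|^p\le C_p\,\mathbb{E}\left(\int_0^r (r-s)^{-2\alpha}\|S(r-s)\Psi(s)\|_{\mathcal{L}_2^0}^2\,ds\right)^{p/2}.
\]
Using the uniform bound $\|S(t)\|\le C_0$ from Lemma \ref{L:2.2} gives $\|S(r-s)\Psi(s)\|_{\mathcal{L}_2^0}\le C_0\|\Psi(s)\|_{\mathcal{L}_2^0}$, so the inner integral is the time-convolution of the kernel $k(u)=u^{-2\alpha}\mathbf{1}_{(0,T]}(u)$ with $\|\Psi(\cdot)\|^2_{\mathcal{L}_2^0}$. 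The key point is to invoke \emph{Young's convolution inequality} in the time variable rather than a crude Hölder split: since $\alpha<1/2$ ensures $k\in L^1(0,T)$, Young's inequality with exponents $(1,p/2,p/2)$ yields, after taking expectations,
\[
\int_0^T \mathbb{E}\|Y(r)\|^p\,dr\le C\,\|k\|_{L^1(0,T)}^{p/2}\,\mathbb{E}\int_0^T\|\Psi(s)\|_{\mathcal{L}_2^0}^p\,ds.
\]

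For step (ii), I would estimate the deterministic convolution pointwise in $t$ by Hölder's inequality with conjugate exponents $p$ and $q=p/(p-1)$:
\[
\|Z(t)\|\le \frac{C_0\sin(\pi\alpha)}{\pi}\left(\int_0^t (t-r)^{(\alpha-1)q}\,dr\right)^{1/q}\left(\int_0^T\|Y(r)\|^p\,dr\right)^{1/p}.
\]
Here the condition $\alpha>1/p$ is exactly what makes $(\alpha-1)q>-1$, so that the first integral converges and is bounded uniformly for $t\in[0,T]$. Taking the supremum over $t\in[0,T]$, raising to the power $p$, taking expectations, and combining with the bound from step (i) gives the claimed inequality with a constant $M_T$ depending only on $T,p,\alpha,C_0$.

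The only genuinely delicate points are bookkeeping ones: justifying the stochastic Fubini interchange in the factorization (which uses the integrability hypothesis on $\Psi$ together with its predictability), and -- most importantly -- the interplay of the two exponent constraints $\alpha>1/p$ and $\alpha<1/2$. Their simultaneous solvability is equivalent to $p>2$, so this is precisely where that hypothesis is essential; it also explains why the sharper Young estimate in step (i), requiring only $\alpha<1/2$, is needed rather than a cruder Hölder bound that would force a more restrictive range of $p$.
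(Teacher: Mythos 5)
Your proof is correct, and it reproduces precisely the factorization-method argument of Da Prato--Zabczyk \cite{13}, which is the source the paper cites for this lemma (the paper itself states it without proof). The exponent bookkeeping is right: $1/p<\alpha<1/2$ is exactly the window opened by $p>2$, Young's inequality with $\alpha<1/2$ handles the $L^p$ bound on $Y$, and $\alpha>1/p$ makes the deterministic H\"older step converge, so nothing further is needed.
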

   The following are the main assumptions on the coefficients of the equation (\ref{eq:1.1}):\\
   {\bf Condition (H2).} The mappings $f:C_h \to H$ and $\sigma: C_h \to \mathcal{L}_2^0$ are continuous and satisfy
   \begin{itemize}
       \item[{[i]}] There exists a positive constant $K>0$ such that 
       \[
       \|f(\f)\| + \|\s(\f)\|_{\L} \leq K(1+\|\f\|_C) \text{ for all } \f \in C_h.
       \]
       \item[{[ii]}] There is a  function $N:[0,+\infty) \to  [0,+\infty)$ such that
       \begin{itemize}
           \item[{(a)}] the function $N$ is continuous, nondecreasing, concaved, and $N(0)=0$;
           \item[{(b)}] For $p>2$ and 
           for all $\f_1, \f_2 \in C_h$ we have
           \[
           \|f(\f_1) - f(\f_2)\|^p + \|\s(\f_1) - \s(\f_2)\|^p_{\L} \leq N \left( \|\f_1 - \f_2\|^p_{C_h}\right).
           \]
        \item[{(c)}] The only nonnegative function $z(t)$ satisfying
        \[
        z(t) \leq D \int_{0}^t N(z(s)) ds \text{ for all } t \in [0,T]
        \]
        is $z \equiv 0$.
       \end{itemize}
       
   \end{itemize}
\begin{remark}\label{Rem1}
The condition (c) holds for any $N$ satisfying (a) and
\[
\int_{0}^1 \frac{ds}{N(s)} = +\infty.
\]
\end{remark}
{\bf Condition (H3).} There exist positive constants $0< \alpha \leq 1$ and $0< M_g < 1$ such that for all $\f_1, \f_2 \in C_h$ the function $g:C_h \to H_{\alpha}$ satisfies
\[
\|g(\f_1) - g(\f_2)\|_{H_\a} \leq M_g\|\f_1-\f_2\|_{C_h}.
\]
{\bf Condition (H4).} The initial condition $\f:[-h,0] \times \Omega \to H$ is an $\mathcal{F}_0$ - measurable random variable, which is independent of $W$ and has continuous trajectories. 

\begin{definition}\label{Def:mild}
\cite{6,7,8} A continuous $\mathcal{F}_t$-adapted stochastic process $u:[-h,T]\times \Omega \to H $ is a {\it mild solution for (\ref{eq:1.1})} for $t \in [0,T]$ if it satisfies the integral equation
\begin{equation}\label{DefMild}
u(t) = S(t)(\f(0) + g(\f)) - g(u_t)   - \int_0^t AS(t-s)g(u_s) ds  + \int_0^t S(t-s) f(u_s) ds + \int_0^t S(t-s) \s(u_s) d W(s)
\end{equation}
and $u(t) = \f(t)$ a.s. for $t \in [-h,0]$.
\end{definition}
The main results of this paper are the Theorem \ref{Th:2.1}, Theorem \ref{Th:2.2} and Theorem \ref{Th:2.4} below.
\begin{theorem}\label{Th:2.1}(Existence and uniqueness) Suppose the conditions {\bf (H1)-(H4)} hold. Then for all $T>0$ the equation (\ref{eq:1.1}) has a unique mild solution $u$ on $[0,T]$. Furthermore, this solution satisfies
\[
\|u_t - u_0\|_{C_h} = \sup_{\theta \in [-h,0]} \|u(t+ \theta) - \f(\theta)\| \to^P 0, t \to 0.
\]
\end{theorem}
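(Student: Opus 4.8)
The plan is to construct the solution by Picard iteration and, because $f,\sigma$ obey only the concave modulus $N$ rather than a Lipschitz bound, to replace the usual Gronwall/contraction step by a Bihari--Osgood argument resting on (H2)(c). Fix the $p>2$ of (H2)(ii). Two preliminary facts are used throughout. First, $(-A)^{-\alpha}$ is bounded (Lemma \ref{L:2.2}), say $\|(-A)^{-\alpha}\|=:c_0$, so the continuous embedding $H_\alpha\hookrightarrow H$ upgrades (H3) to $\|g(\f_1)-g(\f_2)\|\le c_0M_g\|\f_1-\f_2\|_{C_h}$ and gives $g$ linear growth in $C_h$. Second, the neutral convolution is controlled by the fractional-power identity $AS(t-s)=-(-A)^{1-\alpha}S(t-s)(-A)^{\alpha}$ together with Lemma \ref{L:2.2},
\[
\|AS(t-s)g(\psi)\|\le C_{1-\alpha}(t-s)^{-(1-\alpha)}e^{-\delta(t-s)}\,\|g(\psi)\|_{\alpha},
\]
whose kernel $\kappa(\tau)=C_{1-\alpha}\tau^{-(1-\alpha)}e^{-\delta\tau}$ lies in $L^1(0,\infty)$ since $1-\alpha<1$; a Jensen step against the finite measure $\kappa(t-s)\,ds$ then turns sums of such integrals into $p$-th moments.

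Set $u^0(t)=\f(0)$ on $[0,T]$ and $u^0=\f$ on $[-h,0]$, and define explicitly
\[
u^{n+1}(t)=S(t)(\f(0)+g(\f))-g(u^n_t)-\int_0^t AS(t-s)g(u^n_s)\,ds+\int_0^t S(t-s)f(u^n_s)\,ds+\int_0^t S(t-s)\s(u^n_s)\,dW(s),
\]
with $u^{n+1}=\f$ on $[-h,0]$, so that every iterate is a well-defined continuous $\mathcal F_t$-adapted process and $\Delta^n:=u^{n+1}-u^n$ vanishes on $[-h,0]$. First I would prove the uniform a priori bound $\sup_n\E\sup_{t\le T}\|u^n(t)\|^p\le C_T(1+\E\|\f\|_{C_h}^p)$: applying $\E\sup_{r\le t}\|\cdot\|^p$, controlling the deterministic convolutions by (H2)(i), the growth of $g$ and the kernel $\kappa$, and the stochastic convolution by Lemma \ref{L:2.3}, one reaches $\beta_{n+1}(t)\le C_T(1+\E\|\f\|_{C_h}^p)+C\int_0^t(1+\kappa(t-s))\beta_n(s)\,ds$ with $\beta_n(t)=\E\sup_{r\le t}\|u^n(r)\|^p$, which a singular-kernel Gronwall argument closes by induction.

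For convergence put $\psi_n(t)=\E\sup_{r\le t}\|\Delta^n(r)\|^p$. Subtracting consecutive iterations, the pointwise term contributes $c_0M_g\|\Delta^{n-1}_t\|_{C_h}$ and the neutral convolution a term $M_g\int_0^t\kappa(t-s)\|\Delta^{n-1}_s\|_{C_h}\,ds$ (both Lipschitz-order, since $g$ is Lipschitz), while $f,\s$ contribute, through (H2)(ii), concavity and Jensen ($\E\,N(X)\le N(\E X)$), terms of the form $\int_0^t N(\psi_{n-1}(s))\,ds$; using $\|\Delta^{n-1}_t\|_{C_h}=\sup_{r\le t}\|\Delta^{n-1}(r)\|$ (as $\Delta^{n-1}=0$ on $[-h,0]$) gives
\[
\psi_n(t)\le a\,\psi_{n-1}(t)+b\int_0^t \kappa(t-s)\psi_{n-1}(s)\,ds+c_1\int_0^t N(\psi_{n-1}(s))\,ds,
\]
with $a,b$ carrying the factor $M_g$. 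Passing to $\bar\psi(t)=\limsup_n\psi_n(t)$ (finite by the a priori bound) via reverse Fatou and the continuity/monotonicity of $N$, the same inequality holds for $\bar\psi$; absorbing the two Lipschitz-order terms (using $c_0M_g<1$, if necessary first on a short interval and then continuing) reduces it to $\bar\psi(t)\le K_0\int_0^t N(\bar\psi(s))\,ds$, whence $\bar\psi\equiv0$ by (H2)(c). A standard refinement, applied to $\sup_{k}\E\sup_{r\le t}\|u^{n+k}(r)-u^n(r)\|^p$, then shows $\{u^n\}$ is Cauchy in $L^p(\Omega;C([-h,T];H))$, and its limit $u$ solves (\ref{DefMild}) after passing to the limit term by term (continuity of $f,g,\s$, and Lemma \ref{L:2.3} for the stochastic term). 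Uniqueness is the identical estimate applied to $w=u-v$, giving $\phi\le K_0\int_0^t N(\phi)$ with $\phi(t)=\E\sup_{r\le t}\|w(r)\|^p$, so $\phi\equiv0$.

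The final assertion is a soft consequence of path continuity: the solution has almost surely continuous trajectories on the compact $[-h,T]$, hence $u(\cdot,\omega)$ is uniformly continuous there, so $\|u_t-u_0\|_{C_h}=\sup_{\theta\in[-h,0]}\|u(t+\theta)-\f(\theta)\|\to0$ as $t\to0$ almost surely, which in particular yields the stated convergence in probability. I expect the main obstacle to be exactly the interaction between the neutral structure and the non-Lipschitz modulus: the pointwise term $g(u_t)$ and the singular neutral convolution are Lipschitz-order, not of order $N$, so they generate a linear term $a\,\psi$ and a singular Volterra term sitting outside the Bihari integral, and these must be absorbed through $c_0M_g<1$ (with a possible short-time/continuation step to tame the singular kernel) before condition (H2)(c) can be invoked; checking that the constants genuinely permit this absorption is the delicate quantitative point.
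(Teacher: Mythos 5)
Your proposal reaches the right conclusion but takes a genuinely different route from the paper. The paper does not run a fully explicit Picard scheme: it first proves (Lemma \ref{lem:3.1}) that the auxiliary neutral equation (\ref{eq:3.1}) with \emph{frozen} inhomogeneities $(a,b)$ has a unique mild solution, via a contraction argument on a short interval exploiting $M_g<1$ and the integrability of the kernel $C_{1-\alpha}(t-s)^{-(1-\alpha)}$; it then derives Lipschitz estimates for the resulting solution operator $\Phi_{\varphi}(a,b)$ (Lemma \ref{lem:3.2}), and only then iterates, \emph{implicitly}, by setting $a=f(u^{(n-1)}_\cdot)$, $b=\sigma(u^{(n-1)}_\cdot)$ in (\ref{eq:3.18}), so that each iterate $u^{(n)}$ is itself the solution of a neutral equation whose existence is supplied by Lemma \ref{lem:3.1} --- precisely the step the introduction faults earlier works for omitting. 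The payoff is that the neutral terms are absorbed once and for all inside Lemma \ref{lem:3.2}, and the iterates then satisfy a clean Osgood inequality $\psi_n(t)\le B\int_0^t N(\psi_{n-1}(s))\,ds$ to which (H2)(c) applies directly. Your explicit scheme dispenses with the auxiliary fixed-point lemma, at the price of carrying the two Lipschitz-order neutral contributions (the term $a\,\psi_{n-1}(t)$ and the singular Volterra term) through the $\limsup$ recursion and absorbing them afterwards; this does work, by the same short-interval-plus-continuation device the paper encodes in (\ref{cond1})--(\ref{cond2}), and you rightly note that vanishing of consecutive differences does not by itself give Cauchyness, hence the refinement on $\sup_k \E\sup_{r\le t}\|u^{n+k}(r)-u^n(r)\|^p$ is genuinely needed.

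Two points require attention. First, Theorem \ref{Th:2.1} assumes only (H4), which imposes no moment condition on $\varphi$, while every step of your argument (the a priori bound, the finiteness of $\bar\psi$, the reverse Fatou and dominated convergence steps) presupposes $\E\|\varphi\|^p_{C_h}<\infty$; you still need the paper's closing localization with the cutoff functions $\varphi_n$ to cover general initial data. Second, your absorption hinges on $c_0M_g<1$ with $c_0=\|(-A)^{-\alpha}\|$, which is not implied by (H3) (the hypothesis is only $M_g<1$ for the $H_\alpha$-norm); the paper's own computation in (\ref{eq:3.6}) silently uses $\|x\|\le\|x\|_{\alpha}$, so you are no worse off, but you should either justify that normalization or arrange the weighted $(a+b)^p$ inequality, as the paper does, so that the coefficient in front of the $g$-difference term is exactly $M_g$.
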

The main idea of the proof of Theorem \ref{Th:2.1} is to consider an auxiliary semilinear equation, which, in turn, will enable us to constuct a convergent approximating sequence. In contrast with non-neutral type equations (with $g \equiv 0$), the primary difficulty of the equation (\ref{eq:1.1}) comes from the term
$AS(t-s)g(u_s)$. Generally speaking, if $g(u_s) \in H$, this term has a non-integrable singularity at $t=s$. We overcome this difficulty by introducing the fractional powers of the operator $(-A)$. In particular, we show that if $g$ is regular enough for $g(u_s) \in D(-A)^{\alpha}$ for $\alpha > 0$, then this singularity becomes integrable. 

 \begin{remark}
In order to prove Theorem \ref{Th:2.1} we will use the methods outlined in \cite{24}, which established the corresponding result for the neutral type stochastic equations in a simpler case.  Besides, Theorem  \ref{Th:2.1} has a different condition on the Lipschitz constant for $g$. Namely,  \cite{24} requires 
\[
4^{p-1} \|(-A)^{-\a}\|^p M_g < 1,
\]
while we require a much more easy to check condition $M_g<1$. Similar conditions {\red on  $(-A)^{-\alpha}$} appear in \cite{7,33} among others. In addition, we relax the regularity condition for the map $g(\f)$. In particular, we require $g \in H_\alpha$ for $\alpha  \in (0,1)$, while in \cite{24} the authors require $\alpha \in (\frac{1}{p}, 1)$ for $p>2$. 
\end{remark}

\begin{theorem}\label{Th:2.2}(Continuous dependence on the initial data) Suppose the conditions of Theorem \ref{Th:2.1} hold. Let $u(t,\f)$ and $u(t,\Psi)$ be two solutions of (\ref{eq:1.1}) with the initial conditions $\f$ and $\Psi$ satisfying the condition {\bf (H4)}. Then
\begin{equation*}\label{cont_dep}
\E \sup_{t \in [0,T]} \left[\|u(t,\f) - u(t, \Psi)\|^p +  \|u_t(\f) -u_t(\Psi)\|_{C_h}^p\right] \to 0,  
\end{equation*}
as $\E \|\f - \Psi\|_{C_h}^p \to 0.$
\end{theorem}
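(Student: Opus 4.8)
The plan is to control the difference $v(t):=u(t,\f)-u(t,\Psi)$ directly through the mild formulation \eqref{DefMild}. Writing $\Delta g_s:=g(u_s(\f))-g(u_s(\Psi))$ and, analogously, $\Delta f_s$ and $\Delta\s_s$ for $f$ and $\s$, and using $u_0(\f)=\f$, subtraction of the two integral equations gives, for $t\in[0,T]$,
\begin{align*}
v(t)={}&S(t)\big[\f(0)-\Psi(0)+\Delta g_0\big]-\Delta g_t-\int_0^tAS(t-s)\,\Delta g_s\,ds\\
&+\int_0^tS(t-s)\,\Delta f_s\,ds+\int_0^tS(t-s)\,\Delta\s_s\,dW(s)=:I_1-I_2-I_3+I_4+I_5,
\end{align*}
while $v(t)=\f(t)-\Psi(t)$ on $[-h,0]$. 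I would introduce the nondecreasing quantity $\xi(t):=\E\sup_{s\in[-h,t]}\|v(s)\|^p$. Since $\sup_{t\in[0,T]}\|v_t\|_{C_h}=\sup_{s\in[-h,T]}\|v(s)\|$, both terms in the conclusion are dominated by $\xi(T)$, and $\xi(0)=\E\|\f-\Psi\|_{C_h}^p=:\eta$; thus it suffices to prove $\xi(T)\to0$ as $\eta\to0$.

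I would then bound $\E\sup_{r\le t}\|I_j(r)\|^p$ for each $j$. For $I_1$, Lemma \ref{L:2.2} gives $\|S(t)\|\le C_0$ and, with the embedding bound $\|\cdot\|\le\|(-A)^{-\a}\|\,\|\cdot\|_{H_\a}$ and \textbf{(H3)}, one obtains $\E\sup_{r\le t}\|I_1(r)\|^p\le C\eta$. The singular term $I_3$ is handled by fractional powers: writing $AS(t-s)=-(-A)^{1-\a}S(t-s)(-A)^{\a}$ and using Lemma \ref{L:2.2},
\[
\|AS(t-s)\,\Delta g_s\|\le C_{1-\a}\,(t-s)^{-(1-\a)}e^{-\delta(t-s)}\,\|\Delta g_s\|_{H_\a}\le C_{1-\a}M_g\,(t-s)^{-(1-\a)}e^{-\delta(t-s)}\,\|v_s\|_{C_h},
\]
where the last step uses \textbf{(H3)}. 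For $\a\in(0,1)$ the kernel $\kappa(r):=C_{1-\a}\,r^{-(1-\a)}e^{-\delta r}$ is integrable, so applying Jensen's inequality with respect to the finite measure $\kappa(t-s)\,ds$ (rather than Hölder with a conjugate exponent) controls $\E\sup_{r\le t}\|I_3(r)\|^p$ by $C\int_0^t\kappa(t-s)\,\xi(s)\,ds$; this is exactly the device that permits the full range $\a\in(0,1)$ mentioned after Theorem \ref{Th:2.1}. For the drift $I_4$ I would use the exponential decay of $S$ and Hölder's inequality, and for the stochastic convolution $I_5$ the maximal inequality of Lemma \ref{L:2.3}; in both, \textbf{(H2)[ii](b)} replaces $\|\Delta f_s\|^p+\|\Delta\s_s\|_{\L}^p$ by $N(\|v_s\|_{C_h}^p)$, and the concavity of $N$ (Jensen) moves the expectation inside, $\E N(\|v_s\|_{C_h}^p)\le N(\xi(s))$, producing contributions $C\int_0^tN(\xi(s))\,ds$.

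Collecting these bounds, the pointwise neutral term $I_2=\Delta g_t$ is moved to the left-hand side and absorbed using the contraction hypothesis $M_g<1$ of \textbf{(H3)} (sharpened by a weighted convexity inequality), while the linear singular term from $I_3$ is removed by a generalized (Henry-type) Grönwall step, whose resolvent factor is bounded on $[0,T]$ and merely rescales the constants. This should leave an inequality of the non-Lipschitz Grönwall type
\begin{equation}\label{eq:plan-bihari}
\xi(t)\le C\eta+C\int_0^tN(\xi(s))\,ds,\qquad t\in[0,T].
\end{equation}
Since $N$ is only continuous and concave with $N(0)=0$, I would finish not with classical Grönwall but with the comparison encoded in \textbf{(H2)[ii](c)}: for a sequence $\eta_n\to0$ with corresponding bounds $\xi_n$, the a priori moment estimates keep $\{\xi_n\}$ uniformly bounded, so $\bar\xi(t):=\limsup_n\xi_n(t)$ satisfies $\bar\xi(t)\le C\int_0^tN(\bar\xi(s))\,ds$ by monotonicity of $N$ and reverse Fatou, whence $\bar\xi\equiv0$; this is precisely $\xi(T)\to0$. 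Under Remark \ref{Rem1} one may instead estimate \eqref{eq:plan-bihari} explicitly by Bihari's inequality and let $\eta\to0$.

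The main obstacle I anticipate is concentrated in the neutral coefficient $g$. First, the pointwise term $I_2$ must be absorbed into the left-hand side with a coefficient strictly below one; arranging this so that only $M_g<1$ is required, rather than a condition also involving $\|(-A)^{-\a}\|$ as in earlier works, is the delicate improvement highlighted in the remark following Theorem \ref{Th:2.1}, and it constrains how loosely the five terms may be split. Second, the combination of the singular kernel in $I_3$ with the genuinely non-Lipschitz modulus $N$ means the concluding step cannot be a routine Grönwall estimate: one must separate the linear singular part (handled by a Henry-type inequality) from the nonlinear part (handled by \textbf{(H2)[ii](c)}), and justify the interchange of $\limsup$ with the integral via the uniform a priori bound. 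These two points, rather than the term-by-term estimates, are where the real work lies.
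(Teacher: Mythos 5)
Your proposal is correct, and its skeleton coincides with the paper's: both reduce the problem to an inequality of the form $\xi(t)\le C\,\E\|\f-\Psi\|^p_{C_h}+C\int_0^t N(\xi(s))\,ds$ and then conclude exactly as you do, via a uniform a priori moment bound (the paper's estimate (\ref{eq:3.20})), reverse Fatou to pass $\limsup$ inside the integral, continuity and monotonicity of $N$, and finally condition \textbf{(H2)}[ii](c). The difference is in how the linear, $g$-dependent part is handled. The paper does not re-expand the mild formulation at all in the proof of Theorem \ref{Th:2.2}: it simply applies estimate (\ref{eq:3.3}) of Lemma \ref{lem:3.2} with $(a_i,b_i)=(f(u_\cdot(\cdot)),\s(u_\cdot(\cdot)))$, so that all of the work you describe for $I_1,I_2,I_3$ — the weighted inequality $(a+b)^p\le a^p/\rho^{p-1}+b^p/(1-\rho)^{p-1}$ with $\rho=M_g$ to absorb the pointwise term $g(u_t)-g(v_t)$ using only $M_g<1$ (via $\|\cdot\|\le\|\cdot\|_{H_\a}$, without an extra $\|(-A)^{-\a}\|$ factor), and the control of the singular convolution — is already packaged there. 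Moreover, Lemma \ref{lem:3.2} removes the singular linear term not by a Henry-type (singular Gr\"onwall) resolvent argument, as you propose, but by choosing $T_1$ so small that the combined coefficient $\gamma(T_1)$ in (\ref{cond1})--(\ref{cond2}) is below one and then iterating over the finitely many intervals $[kT_1,(k+1)T_1]$. Both devices are legitimate; your route is self-contained and arguably cleaner analytically (the Jensen-against-$\kappa(t-s)\,ds$ step and the resolvent bound are standard), while the paper's route costs nothing extra because Lemma \ref{lem:3.2} was needed anyway for the Picard iteration in the existence proof. The two concerns you flag at the end — the absorption constant and the interchange of $\limsup$ with the integral — are exactly the points the paper addresses (in Lemma \ref{lem:3.2} and via (\ref{eq:3.20}), respectively), so nothing essential is missing from your plan.
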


Denote $B_b(C_h)$ to be the Banach space of bounded real Borel measurable functionals, on $C_h$, and $C_b(C_h)$ is the space of bounded continuous functionals on $C_h$. Since Theorem \ref{Th:2.1} guarantees the existence and uniqueness of the solution at any $t \geq 0$, replacing the initial interval $[-h,0]$ with $[-h+s,s]$ for any $s \geq 0$ we can guarantee the existence and uniqueness of the solution on any interval $[s,t]$ with the initial $\mathcal{F}_s$ measurable function $\f$ satisfying the conditions on $[s-h,s]$. Such solutions will be denoted with $u(t,s,\f)$. Similarly, denote $u_t(s,\theta) = u(t+\theta, s, \f), \theta \in [-h,0]$ to be the shift of the solution $u(t,\f)$, such that $u_s(s,\f) = u(s+\theta,s,\f) = \f(\theta)$.

Following \cite{23}, define the family of shift operators
\begin{equation}\label{eq:2.3}
U_s^t \f:= u(t+ \theta, s, \f) = u_t(s,\f).
\end{equation}
Denote $\mathcal{F}_s^t (dW)$ to be the minimal $\sigma$ - algebra containing $W(\tau) - W(s), \tau \in [s,t]$, and $G^t$ to be the minimal $\sigma$ - algebra containing $W(\tau) - W(t)$ for $\tau \geq t$.  For any nonrandom $\varphi \in C_h$ with $s \geq 0$ and $t \geq s$ the solution $U_s^t \f = u_t(s,\f)$ is a $\mathcal{F}_{s}^t(d W)$- measurable random function taking values in $C_h$.   Note that in this case, since $\f$ is nonrandom, $u_t(s,\f)$ is independent of the $\sigma$ - algebra $G^t$. The next proposition was shown, e.g. in \cite{18}:
\begin{proposition}
The operator (\ref{eq:2.3}) satisfies
\[
U_{\tau}^t U_{s}^{\tau} \f = U_s^t \f
\]
for all $t \geq \tau \geq s \geq 0$, and $\f \in C_h$.
\end{proposition}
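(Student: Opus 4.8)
The plan is to deduce the identity from the uniqueness part of Theorem~\ref{Th:2.1} together with the semigroup law $S(r-\tau)S(\tau-\rho)=S(r-\rho)$ and the additivity of the integrals in (\ref{DefMild}) across the splitting point $\tau$. Fix $t\ge\tau\ge s\ge 0$ and a nonrandom $\f\in C_h$, and write $v(r):=u(r,s,\f)$ for the mild solution started at time $s$ with history $\f$; by Theorem~\ref{Th:2.1}, applied on $[s,t]$, this process exists, is unique, and is continuous. Set $\psi:=U_s^\tau\f=v_\tau$, the $C_h$-valued segment of $v$ at time $\tau$. Since $v$ is continuous and $\mathcal{F}_s^\tau(dW)$-measurable, $\psi$ is $\mathcal{F}_\tau$-measurable, has continuous trajectories, and is independent of the increments $\{W(\rho)-W(\tau):\rho\ge\tau\}$; hence $\psi$ is an admissible initial datum in the sense of {\bf (H4)} for the problem restarted at time $\tau$. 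Let $w(r):=u(r,\tau,\psi)$ be the corresponding mild solution on $[\tau,t]$, so that $U_\tau^t(U_s^\tau\f)=w_t$ while $U_s^t\f=v_t$.

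The core of the argument is to show that the restriction of $v$ to $[\tau,t]$ is itself a mild solution of the problem restarted at $\tau$ with datum $\psi$; uniqueness then forces $v\equiv w$ on $[\tau,t]$. To this end I would write out (\ref{DefMild}) for $v$ evaluated at time $\tau$, rearrange it to isolate $v(\tau)+g(v_\tau)$, and apply the operator $S(r-\tau)$ for $r\in[\tau,t]$. Using $S(r-\tau)S(\tau-s)=S(r-s)$, $S(r-\tau)S(\tau-\rho)=S(r-\rho)$, and, on the range where $S(\tau-\rho)g(v_\rho)\in D(A)$, the commutation $S(r-\tau)A\,S(\tau-\rho)=A\,S(r-\rho)$, the transported terms become integrals over $[s,\tau]$ of exactly the integrands appearing in (\ref{DefMild}). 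Adding to these the restart integrals over $[\tau,r]$ and invoking interval-additivity of the deterministic and It\^o integrals at the point $\tau$, every term reassembles precisely into the right-hand side of (\ref{DefMild}) for $v(r)$ on $[s,r]$. This verifies that $v|_{[\tau,t]}$ solves the restarted equation with initial datum $\psi$.

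By the uniqueness assertion of Theorem~\ref{Th:2.1} on $[\tau,t]$, we then conclude $v(r)=w(r)$ almost surely for all $r\in[\tau,t]$. Passing to the segments at $r=t$ yields $v_t=w_t$, that is, $U_s^t\f=U_\tau^t U_s^\tau\f$, which is the desired cocycle identity.

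I expect the main obstacle to be the rigorous justification of the commutation $S(r-\tau)A\,S(\tau-\rho)=A\,S(r-\rho)$ inside the singular convolution $\int A\,S(r-\rho)g(v_\rho)\,d\rho$. One must check that $S(\tau-\rho)g(v_\rho)\in D(A)$ and that $A$ may be passed through the bounded operator $S(r-\tau)$; this is exactly where condition {\bf (H3)}, giving $g(v_\rho)\in H_\alpha$, together with Lemma~\ref{L:2.2}, enters to tame the singularity at $\rho=\tau$ and secure integrability. By contrast, the splitting of the stochastic convolution at $\tau$ requires only the standard additivity of the It\^o integral over adjacent intervals and the fact that $S(r-\tau)$ is a fixed deterministic bounded operator, so it poses no real difficulty.
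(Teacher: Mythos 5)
Your argument is correct and is the standard proof of the cocycle identity: restrict the solution started at $s$ to $[\tau,t]$, use the semigroup law and the identity $S(r-\tau)AS(\tau-\rho)=AS(r-\rho)$ (valid for the analytic semigroup, with integrability of the singular convolution secured by {\bf (H3)} and Lemma~\ref{L:2.2}) to show it solves the problem restarted at $\tau$ with datum $v_\tau$, then invoke uniqueness. The paper itself does not prove this proposition but cites \cite{18}; your computation is exactly the technique the paper does carry out for the time-homogeneity statement in Proposition~\ref{Prop:2.3}, so there is no substantive divergence to report.
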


Let $D$ be a $\sigma$-algebra of Borel subsets of $C_h$. For every $A \in D$ we define
\begin{equation}\label{def of P}
\mu_t(A)  = P\{u_t(s,\varphi) \in A\} = P\{U_s^t \varphi  \in A\} = P(s,\varphi,t,A).
\end{equation}
This way $u_t(s,\f)$ naturally defines a measure on $D$. The formula (\ref{def of P}) defines a transition probability function, corresponding to the random process $u_t(s,\f), t \geq s \geq 0$. In a similar way to the finite dimensional case \cite{23}, it is possible to show that this function satisfies the properties of the transition probability. This way we have
\begin{theorem}
(Markov property) Under the assumptions of Theorem \ref{Th:2.1}, the process $u_{t}(s,\f)$ is the Markov process on $C_h$ with the transition function, defined by (\ref{def of P}).
\end{theorem}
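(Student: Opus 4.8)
The plan is to derive the Markov property from the cocycle identity of the preceding Proposition together with the independence structure of the increments of $W$, following the finite-dimensional scheme of \cite{23}. It suffices to prove that for every $t \geq \tau \geq s \geq 0$, every bounded Borel functional $\psi \in B_b(C_h)$, and the transition function of (\ref{def of P}),
\[
\E\big[\psi(u_t(s,\f)) \mid \mathcal{F}_\tau\big] = \int_{C_h} \psi(y)\, P(\tau, u_\tau(s,\f), t, dy) \qquad \text{a.s.}
\]
Indeed, taking $\psi = \mathbf{1}_A$ for $A \in D$ gives exactly the Markov property, and the Chapman--Kolmogorov relation for $P$ then follows by taking expectations and using the tower property of conditional expectation.

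First I would use the cocycle identity $U_\tau^t U_s^\tau \f = U_s^t \f$ to write $u_t(s,\f) = U_\tau^t\big(u_\tau(s,\f)\big)$, which recasts the left-hand side as $\psi$ evaluated at the evolution operator $U_\tau^t$ applied to the random initial datum $\eta := u_\tau(s,\f)$. The gain is that $\eta$ is $\mathcal{F}_\tau$-measurable (it is $\mathcal{F}_s^\tau(dW)$-measurable for nonrandom $\f$), whereas $U_\tau^t$ is built from the increments $W(r)-W(\tau)$, $r \in [\tau,t]$, and is therefore $\mathcal{F}_\tau^t(dW)$-measurable on nonrandom data; by the defining property of the filtration, $\mathcal{F}_\tau^t(dW)$ is independent of $\mathcal{F}_\tau$. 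Using Theorem \ref{Th:2.2}, the map $\xi \mapsto U_\tau^t \xi$ is continuous from $L^p$-initial data to $L^p$-solutions, so $(\xi,\omega)\mapsto U_\tau^t\xi$ admits a jointly measurable version. I would then apply the standard independence (``freezing'') lemma for conditional expectations: since $\Phi(\xi,\omega) := \psi(U_\tau^t \xi)$ depends on $\omega$ only through $\mathcal{F}_\tau^t(dW)$, which is independent of $\mathcal{F}_\tau$, and $\eta$ is $\mathcal{F}_\tau$-measurable,
\[
\E\big[\psi(U_\tau^t \eta) \mid \mathcal{F}_\tau\big] = \Big(\E\,\psi(U_\tau^t \xi)\Big)\Big|_{\xi=\eta}.
\]
Since $\E\,\psi(U_\tau^t \xi) = \int_{C_h}\psi(y)\,P(\tau,\xi,t,dy)$ by (\ref{def of P}), substituting $\xi=\eta = u_\tau(s,\f)$ yields the target identity.

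The hard part will be justifying rigorously that $U_\tau^t$ applied to the random datum $\eta$ coincides with the solution of (\ref{eq:1.1}) restarted at time $\tau$ from that $\mathcal{F}_\tau$-measurable initial condition, which is precisely what makes the freezing lemma applicable with the correct independence. I would handle this by approximation: write $\eta$ as an $L^p$-limit of simple $\mathcal{F}_\tau$-measurable functions $\eta_n = \sum_k \xi_k^{(n)} \mathbf{1}_{B_k^{(n)}}$ with $\xi_k^{(n)} \in C_h$ nonrandom and $B_k^{(n)} \in \mathcal{F}_\tau$ (possible since $C_h$ is separable). On each atom $B_k^{(n)}$ the restart has a nonrandom initial condition independent of the driving increments, so the pathwise uniqueness of Theorem \ref{Th:2.1} identifies the restarted solution with $U_\tau^t \xi_k^{(n)}$, and the freezing identity holds for $\eta_n$ by the computation above. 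Finally I would pass to the limit $n \to \infty$: Theorem \ref{Th:2.2} gives $U_\tau^t \eta_n \to U_\tau^t \eta$ in $L^p(\Omega; C_h)$, hence a.s. along a subsequence, so for continuous bounded $\psi$ both sides of the identity converge; a monotone-class argument then extends it from $\psi \in C_b(C_h)$ to all $\psi \in B_b(C_h)$. The genuine effort lies in this approximation-and-limit step, not in the formal freezing computation.
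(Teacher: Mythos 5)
The paper does not actually prove this theorem: it only remarks that the transition-function properties can be verified ``in a similar way to the finite dimensional case \cite{23}'' and then states the result. Your proposal is a correct and complete outline of precisely that standard argument --- cocycle identity, freezing lemma for conditional expectations, identification of the restarted solution via simple-function approximation of the $\mathcal{F}_\tau$-measurable datum, passage to the limit using Theorem \ref{Th:2.2}, and a monotone-class extension from $C_b(C_h)$ to $B_b(C_h)$ --- so it supplies exactly the details the paper delegates to the citation, and you correctly identify the approximation-and-limit step as the only point requiring real work.
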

\begin{proposition}\label{Prop:2.3}
For any $t \geq s \geq 0$ and any $A \in D$ we have
\begin{equation*}
P(s,\f,t,A) = P(0,\f,t-s,A).
\end{equation*}
\end{proposition}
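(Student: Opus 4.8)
The plan is to exploit the fact that the coefficients $A$, $f$, $g$, $\s$ in (\ref{eq:1.1}) are autonomous and that the driving noise $W$ has stationary, independent increments, so that translating the time origin from $s$ to $0$ leaves the law of the segment process unchanged. Concretely, I would introduce the time-shifted process $\tilde W(\rho) := W(s+\rho)-W(s)$, $\rho \geq 0$. Because $W$ is a $Q$-Wiener process adapted to the normal filtration $\{\F_t\}$, the increment properties listed in Section~\ref{sec2} guarantee that $\tilde W$ is again a $Q$-Wiener process, now with respect to the shifted filtration $\{\F_{s+\rho}\}_{\rho\geq 0}$, and crucially that $\tilde W$ has the same law on path space as $W$ itself (both are $Q$-Wiener processes, which are determined in law by $Q$).

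First I would write the defining mild equation (\ref{DefMild}) for $u(\cdot,s,\f)$ on $[s,t]$: for $\xi \in [s,t]$,
\[
u(\xi)=S(\xi-s)(\f(0)+g(\f))-g(u_\xi)-\int_s^\xi AS(\xi-r)g(u_r)\,dr+\int_s^\xi S(\xi-r)f(u_r)\,dr+\int_s^\xi S(\xi-r)\s(u_r)\,dW(r).
\]
Setting $\eta=\xi-s$ and $v(\eta):=u(s+\eta)$, so that $v_\eta=u_{s+\eta}$ and $v$ carries the same initial datum $\f$ on $[-h,0]$, the substitution $r=s+\rho$ in each integral (using $dW(s+\rho)=d\tilde W(\rho)$ for the stochastic integral) transforms this into
\[
v(\eta)=S(\eta)(\f(0)+g(\f))-g(v_\eta)-\int_0^\eta AS(\eta-\rho)g(v_\rho)\,d\rho+\int_0^\eta S(\eta-\rho)f(v_\rho)\,d\rho+\int_0^\eta S(\eta-\rho)\s(v_\rho)\,d\tilde W(\rho).
\]
This is precisely the mild equation started at time $0$ with initial condition $\f$ and driving noise $\tilde W$, so by the uniqueness half of Theorem~\ref{Th:2.1}, $v$ is the unique such solution.

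The decisive step is to pass from this pathwise identity to an identity of laws. Since $\f$ is nonrandom, the successive-approximation construction underlying Theorem~\ref{Th:2.1} realizes the solution as a fixed measurable functional $\Phi$ of the initial datum and the driving increments: $u_{t-s}(0,\f)=\Phi(\f,\{W(\rho)\}_{\rho\in[0,t-s]})$ and, by the computation above, $v_{t-s}=\Phi(\f,\{\tilde W(\rho)\}_{\rho\in[0,t-s]})$, the functional being the same in both cases because the coefficients are time-independent. Noting that $v_{t-s}=u_{(t-s)+s}=u_t(s,\f)$ and that $\{\tilde W(\rho)\}_{\rho\in[0,t-s]}$ has the same law as $\{W(\rho)\}_{\rho\in[0,t-s]}$, the two functionals have equal distributions, whence $u_t(s,\f)\stackrel{d}{=}u_{t-s}(0,\f)$. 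Evaluating both sides on $A\in D$ yields
\[
P(s,\f,t,A)=P\{u_t(s,\f)\in A\}=P\{u_{t-s}(0,\f)\in A\}=P(0,\f,t-s,A),
\]
which is the claim.

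The hard part will be the last step: rigorously justifying that the law of the segment $u_t(s,\f)$ depends on the noise only through its increments on $[s,t]$ and via a functional whose form is independent of $s$. I would secure this by checking that each Picard iterate is a measurable functional of $(\f,\tilde W)$ of the same form for every $s$ — which holds exactly because $A$, $f$, $g$, $\s$ do not depend on time — and that the $L^p$-convergence of the iterates established in the proof of Theorem~\ref{Th:2.1} is preserved under the noise-translation, so that the limiting functional $\Phi$ is likewise $s$-independent. Everything else reduces to the routine change of variables above together with the stationary-increment property of the $Q$-Wiener process.
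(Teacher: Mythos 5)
Your proposal is correct and follows essentially the same route as the paper: the paper likewise sets $\tilde u(t)=u(s+t,s,\f)$, performs the same change of variables to show $\tilde u$ solves the mild equation driven by $\tilde W(\tau)=W(s+\tau)-W(s)$, and invokes uniqueness together with the equality in law of $W$ and $\tilde W$ to conclude the distributions coincide. Your additional care about realizing the solution as a fixed measurable functional of $(\f,\tilde W)$ only makes explicit a step the paper leaves implicit.
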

\begin{proof}
Let $\tilde{u}(t) = u(s+t,s,\f)$. Then $\tilde{u}_0 = u(s+\theta, s, \f), \tilde{u}_t = u(s+t+\theta, s, \f) = u_{s+t}(s,\f)$, for $\theta \in [-h,0].$ On the other hand, 
\begin{eqnarray*}
    & &\tilde{u}(t) = u(s+t,s,\f) = S(t)(\f(0)+ g(\f)) - g(u_{t+s}) - \int_{s}^{s+t} A S(t+s-\tau) g(u_\tau) d\tau \\
   & &+  \int_s^{s+t} S(t+s-\tau) f(u_\tau) d \tau +  \int_s^{s+t} S(t+s-\tau) \sigma(u_\tau) d W(\tau) = S(t)(\f(0)+ g(\f)) - g(u_{t+s}) \\
   & & - \int_0^t A S(t-\tau) g(u_{\tau+s}) d\tau + \int_0^t  S(t-\tau) f(u_{\tau+s}) d\tau + \int_0^t S(t-\tau) \sigma(u_{\tau+s}) d \tilde{W}(\tau),
\end{eqnarray*}
where
$\tilde{W}(\tau) := W(s+ \tau) - W(s)$ is a $Q$-Wiener process as well. This way
\begin{eqnarray}\label{2.6}
    & &\tilde{u}(t) = S(t)(\f(0)+ g(\f)) - g(\tilde{u}_{t}) - \int_{0}^{t} A S(t-\tau) g(\tilde{u}_\tau) d\tau \\
   \nonumber & & +  \int_0^{t} S(t-\tau) f(\tilde{u}_\tau) d \tau +  \int_0^{t} S(t-\tau)  \sigma(\tilde{u}_\tau) d \tilde{W}(\tau).
\end{eqnarray}
However, the same equation is satisfied with $u(t,0,\f)$, with $u(0,0,\f) = \f(0)$ and $u_0 = \f(\theta)$. The only difference is that $u(t,0,\f)$ solves (\ref{2.6}) with a different $Q$-Wiener process $\tilde{W}$. However, since the distribution function for $W$ is the same as for $\tilde{W}$, the uniqueness of solution implies that the distributions of $u(s+t, s,\f)$ and $u(t,0,\f)$ coincide. In other words, the distribution of $u(s+t, s,\f)$ is independent of $s$. Then the distribution of $u_t(s,\f) = u(t + \theta, s, \f) = u(t - s + \theta + s, s, \f)$ matches the distribution of $u(t - s + \theta, 0, \f) = u_{t-s}(0,\f)$. Hence
\begin{eqnarray*}
P(s,\f,t,A) = P\{u_t(s,\f) \in A)\} & = & P\{u(t+\theta, s, \f) \in A\} \\
& = & P\{u(t+\theta -s, 0, \f) \in A\} = P\{u_{t-s}(0,\f) \in A\},
\end{eqnarray*}
which yields the desired result.
\end{proof}
For $g \in B_b(C_h)$, for all $\f \in C_h$ and $t \geq s \geq 0$ define 
\[
P_{s,t}(\f) := \mathbb{E} \, g(u_t(s,\f)).
\]
From Proposition \ref{Prop:2.3} we have $P_{s,t}(\f) = P_{0,t-s}(\f)$ and denote $P_t \f = P_{0,t}(\f)$. The following proposition follows from Theorem \ref{Th:2.2}.
\begin{proposition}
Under the assumptions {\bf (H1)-(H4)} the transition semigroup $P_t, t\geq 0$ is stochastically continuous and and satisfies the Feller property 
\[
P_t: C_b(C_h) \to C_b(C_h) \text{ and } \lim_{t \to 0} P_t \f = \f. 
\]
\end{proposition}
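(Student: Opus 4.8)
The transition semigroup acts on a test functional $\psi \in B_b(C_h)$ by $(P_t\psi)(\f) = \E\,\psi(u_t(0,\f))$, and boundedness is immediate, since $|(P_t\psi)(\f)| \le \|\psi\|_\infty$ for every $\f$. The plan is to reduce both claimed properties to a single elementary principle: \emph{if $X_n \to X$ in probability as $C_h$-valued random variables and $\psi \in C_b(C_h)$, then $\E\,\psi(X_n) \to \E\,\psi(X)$.} I would prove this by the subsequence criterion together with dominated convergence: every subsequence has a further subsequence along which $X_n \to X$ almost surely, so $\psi(X_n) \to \psi(X)$ a.s. by continuity, and $|\psi(X_n)| \le \|\psi\|_\infty$ permits passage to the limit; since the limit does not depend on the chosen subsequence, the whole sequence converges. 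Once this is in place, both the Feller property and stochastic continuity become corollaries of Theorems \ref{Th:2.1} and \ref{Th:2.2}, with no new estimates on the mild solution required.

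For the Feller property I would fix $t > 0$ and $\psi \in C_b(C_h)$ and take nonrandom initial data $\f_n \to \f$ in $C_h$. Because the $\f_n$ are deterministic, $\E\|\f_n - \f\|_{C_h}^p = \|\f_n - \f\|_{C_h}^p \to 0$, so Theorem \ref{Th:2.2} gives
\[
\E \sup_{\tau \in [0,t]} \|u_\tau(0,\f_n) - u_\tau(0,\f)\|_{C_h}^p \to 0,
\]
and in particular $u_t(0,\f_n) \to u_t(0,\f)$ in $L^p(\Omega;C_h)$, hence in probability. The principle above then yields $(P_t\psi)(\f_n) \to (P_t\psi)(\f)$, so $P_t\psi$ is continuous; combined with the boundedness already noted this gives $P_t : C_b(C_h) \to C_b(C_h)$.

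For stochastic continuity I would show that $\lim_{t \to 0}(P_t\psi)(\f) = \psi(\f)$ for each fixed $\f \in C_h$ and $\psi \in C_b(C_h)$. The input here is the second assertion of Theorem \ref{Th:2.1}, namely that $\|u_t(0,\f) - \f\|_{C_h} \to 0$ in probability as $t \to 0$; equivalently $u_t(0,\f) \to u_0(0,\f) = \f$ in probability in $C_h$. Testing against an arbitrary sequence $t_n \to 0$ and applying the principle with the deterministic limit $X = \f$ gives $\E\,\psi(u_{t_n}(0,\f)) \to \psi(\f)$, and since the sequence $t_n \to 0$ was arbitrary, $(P_t\psi)(\f) \to \psi(\f)$ as $t \to 0$. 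This is precisely the stochastic continuity of the process started at $\f$, and the pointwise limit asserted in the statement.

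The argument carries essentially no difficulty beyond the bookkeeping above: the substantive work was already done in establishing continuous dependence on the data (Theorem \ref{Th:2.2}) and continuity of $t \mapsto u_t$ at the origin (Theorem \ref{Th:2.1}). The one place deserving care is the bridging principle, where both the boundedness and the continuity of $\psi$ are genuinely used, and where one cannot invoke dominated convergence directly against convergence in probability but must instead pass through almost surely convergent subsequences. I expect this to be the only technical point, with no real obstacle arising.
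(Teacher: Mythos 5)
Your argument is correct and follows exactly the route the paper intends: the paper simply asserts that the proposition ``follows from Theorem \ref{Th:2.2}'' (together with the continuity-in-probability statement of Theorem \ref{Th:2.1}), and your write-up supplies precisely the missing details, namely the reduction of both the Feller property and stochastic continuity to the standard fact that convergence in probability tested against bounded continuous functionals yields convergence of expectations. No discrepancy with the paper's approach and no gap.
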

We are now in position to state our main result on the long time behavior of solutions of (\ref{eq:1.1}).
\begin{theorem}\label{Th:2.4}
Assume the assumptions of Theorem \ref{Th:2.1} hold. Suppose the equation (\ref{eq:1.1}) has a solution $u(t)$ which is bounded in probability for $t \geq 0$ in $C_h$, namely
\begin{equation}\label{eq:2.7}
\sup_{t \geq 0} P\{\|u_t\|_{C_h} > R\} \to 0, R \to \infty.
\end{equation}
Then there exists an invariant measure $\mu$ in $C_h$, i.e.
\[
\int_{C_h} P_t \f(x) d \mu(x) = \int_{C_h} \f(x) d \mu(x), \text{ for any } t \geq 0, \text{ and } \f \in C_b(C_h).
\]
\end{theorem}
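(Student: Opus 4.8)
The plan is to invoke the Krylov--Bogoliubov theorem in the form used by Da Prato and Zabczyk \cite{13}, which requires two ingredients: the Feller property of the transition semigroup $P_t$ (already available from the Proposition preceding the statement) and the tightness of the family of time-averaged measures. Concretely, I would fix an arbitrary initial condition $\f$ (for instance $\f \equiv 0$, which is admissible under \textbf{(H4)}) and define the Cesàro averages of the transition probabilities
\begin{equation*}
\mu_T(A) := \frac{1}{T} \int_0^T P(0,\f,t,A)\, dt = \frac{1}{T}\int_0^T P\{u_t(0,\f) \in A\}\, dt, \qquad A \in D.
\end{equation*}
The Krylov--Bogoliubov argument then says that if the family $\{\mu_T\}_{T>0}$ is tight on $C_h$, any weak limit point $\mu$ along a sequence $T_n \to \infty$ is an invariant measure for $P_t$; the invariance identity $\int P_t \f\, d\mu = \int \f\, d\mu$ follows from the semigroup property of $P_t$ together with the Feller property, which guarantees $P_s \f \in C_b(C_h)$ so that weak convergence may be applied to the bounded continuous test functional $P_s\f$.

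The crux of the proof is establishing tightness of $\{\mu_T\}$, and this is where the boundedness-in-probability hypothesis (\ref{eq:2.7}) enters decisively. First I would show that hypothesis (\ref{eq:2.7}) immediately gives tightness of $\{\mu_T\}$ as measures on the \emph{larger} space $C_h = C([-h,0],H)$ in the topology of a weaker target: indeed, for any $\ve>0$ one chooses $R$ with $\sup_{t\geq 0}P\{\|u_t\|_{C_h}>R\}<\ve$, and then for every $T$ we get $\mu_T\{\|x\|_{C_h} > R\} < \ve$, so the family is uniformly concentrated on balls of $C_h$. However, balls in $C_h$ are not compact, so this alone is insufficient; the real work is to upgrade this bound into concentration on a genuinely compact subset of $C_h$. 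The natural compact sets here are furnished by Lemma \ref{L:2.1}: bounded subsets of $H_\alpha$ for $\alpha>0$ embed compactly into $H=H_0$, and an Arzelà--Ascoli argument on $[-h,0]$ then produces compact subsets of $C_h$ from families that are uniformly bounded in $H_\alpha$-norm and uniformly equicontinuous.

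Thus the technical heart is a uniform-in-time regularity estimate: I would use the mild formulation (\ref{DefMild}), now centered at a general starting time via the stationarity in Proposition \ref{Prop:2.3}, to bound moments of the solution in the stronger norms. Specifically, I expect to prove bounds of the form
\begin{equation*}
\sup_{t \geq 0} \EE \|(-A)^{\gamma} u_t\| \leq C(R) \quad\text{and}\quad \sup_{t\geq 0}\EE \|u(t+\theta_1)-u(t+\theta_2)\| \leq C(R)\,|\theta_1-\theta_2|^{\kappa}
\end{equation*}
for suitable $\gamma \in (0,\alpha]$ and Hölder exponent $\kappa>0$, with the constant controlled by the radius $R$ from (\ref{eq:2.7}). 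The ingredients are the exponential decay estimates $\|(-A)^\gamma S(t)\| \leq C_\gamma t^{-\gamma} e^{-\delta t}$ of Lemma \ref{L:2.2}, which make the convolution terms $\int_0^t AS(t-s)g(u_s)\,ds$ and $\int_0^t S(t-s)f(u_s)\,ds$ integrable and uniformly bounded in time thanks to the factor $e^{-\delta t}$; the stochastic-convolution moment bound of Lemma \ref{L:2.3} for the noise term; the linear growth condition \textbf{(H2)[i]}; and the contraction-type condition $M_g<1$ in \textbf{(H3)} to absorb the non-integral neutral term $g(u_t)$, which lives in $H_\alpha$ by assumption. The main obstacle I anticipate is precisely the neutral term: the decay $e^{-\delta t}$ handles the convolutions, but the pointwise term $g(u_t)$ and the singular integral $\int_0^t (-A)^{1-\gamma}(-A)^\gamma S(t-s) g(u_s)\,ds$ must be controlled simultaneously, and turning the moment bounds into honest tightness requires choosing $\gamma<\alpha$ so that the $H_\alpha$-regularity of $g$ survives the action of $(-A)^{1-\gamma}S(t-s)$ while keeping the singularity $(t-s)^{-(1-\gamma)}$ integrable --- this is the same interplay between fractional powers and the neutral nonlinearity that already drives the existence theory in Theorem \ref{Th:2.1}.
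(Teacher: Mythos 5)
Your high-level frame (Krylov--Bogoliubov plus the Feller property, with compactness supplied by the embedding $H_\alpha\subset H$ and an Arzel\`a--Ascoli argument on $[-h,0]$) matches the paper, but the step you call the ``technical heart'' has a genuine gap: you propose to derive uniform-in-time moment bounds $\sup_{t\ge 0}\E\|(-A)^\gamma u_t\|\le C(R)$ and a uniform H\"older-in-time moment estimate ``with the constant controlled by the radius $R$ from (\ref{eq:2.7})''. Hypothesis (\ref{eq:2.7}) is boundedness \emph{in probability}, not boundedness of moments; a process can satisfy (\ref{eq:2.7}) while $\E\|u_t\|^p=\infty$ for every $t$, so no Gronwall-type iteration of the mild formulation will produce the bounds you want from this hypothesis alone. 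The paper avoids this entirely: it proves (Lemma \ref{lem:3.7}) that for a \emph{deterministic} initial segment $\f$ with $\|\f\|_{C_h}\le r$ the random variable $u_T$ lands in an explicit compact set $K(r)$ with probability at least $1-Cr^{-p}(1+\|\f\|^p_{C_h})$ --- here the finite-horizon moment bound (\ref{eq:3.20}) is legitimately available --- and then uses the Markov property together with homogeneity (Proposition \ref{Prop:2.3}) to write, for $t>T$,
\[
P\{u_t\in K\}=\E\bigl(P(0,u_{t-T},T,K)\bigr)\ \ge\ \E\bigl(P(0,u_{t-T},T,K)\,\chi_{\{\|u_{t-T}\|_{C_h}\le R_1\}}\bigr),
\]
so that (\ref{eq:2.7}) is only ever used to make the restarting point $u_{t-T}$ bounded with high probability, never to control moments. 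This restart-and-condition step is the essential idea your proposal is missing.

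Two smaller points. First, the compactness itself is obtained in the paper not from pathwise H\"older estimates but from \emph{compactness of the solution operators}: the singular convolution $\f\mapsto\int_0^{T+\theta}AS(T+\theta-s)\f(s)\,ds$ is a compact operator from $C([0,T],H_\alpha)$ to $C_h$ (Lemma \ref{lem:3.5}), and the drift and stochastic convolutions are handled by the compact operators $G_1$, $G_\beta$ of Lemma \ref{lem:3.6} via the factorization method; the non-convolution neutral term $g(u_{T+\theta})$ requires its own argument (Lemmas \ref{lem:3.3}, \ref{lem:3.4} and \ref{lem:3.8}) based on stochastic equicontinuity of $\theta\mapsto u_{T+\theta}$, which your sketch does not isolate. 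Second, you cannot take $\f\equiv 0$ as the initial condition: the hypothesis asserts the existence of \emph{some} solution satisfying (\ref{eq:2.7}), and the Krylov--Bogoliubov averages must be built from that particular solution, since an arbitrarily chosen initial condition need not produce a trajectory bounded in probability.
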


\begin{remark}
The condition (\ref{eq:2.7}) is a standard condition for the existence of invariant measures, see e.g. \cite{25}. In Section \ref{sec4} we will show how to verify this condition in terms of the coefficients of a neutral type stochastic functional-differential equation with elliptic operator.
\end{remark}

\section{Proofs of the main results}\label{sec3}
\subsection{Proof of existence and uniqueness}
In this subsection we will prove Theorem \ref{Th:2.1}. We are going to use the approach from \cite{24}. Assume first that $\E \sup_{\theta \in [-h,0]} \|\f(\theta)\|^p < \infty, p>2$. Let $\mathcal{L}_{\mathcal{F}}^p([0,T],H)$ be the Banach space of $H$-valued, $\mathcal{F}_t$-adapted random processes with the norm
\[
\|\zeta\|^p_{H,T} := \E \int_0^T \|\zeta(t)\|^p dt.
\]
Similarly, let $\mathcal{L}_{\mathcal{F}}^p([0,T],\mathcal{L}_2^0)$, be the Banach space of $\mathcal{L}_2^0$ valued $\mathcal{F}_t$-adapted random processes $\Phi(t)$ with the norm
\[
\|\Phi\|^p_{L_2^0, T}:=\E \int_{0}^T \|\Phi(t)\|_{L_2^0}^p dt.
\]
Next, denote $\mathcal{B}_{p,T}$ to be the Banach space of all $H$-valued, measurable, continuous $\mathcal{F}_t$-adapted (for $t \geq 0$) processes $u(t,\omega):[-h,T] \times \Omega \to H$, equipped with the norm \begin{equation*}
    \|u\|_{\mathcal{B}_{p,T}}^p := \E \sup_{t \in [-h,T]} \|u(t)\|^p.
\end{equation*} 
Finally, introduce the closed set
\[
\mathcal{B}_{p,T}(\f):=\{u \in \mathcal{B}_{p,T}, u(t) = \f(t) \text{ for } t \in [-h,0]\}.
\]
We will use the following Lemma:
\begin{lemma}\label{L:2.4}
For $T>0$, let $u(t), -h \leq t \leq T$ be a stochastic process with continuous paths. Then, if $p \geq 1$ and $u_0 = \f$, we have
\[
\E \sup_{t \in [0,T]} \|u_t\|_{C_h}^p \leq \E \|\f\|_{C_h}^p +\E \sup_{t \in [0,T]} \|u(t)\|^p.
\]
\end{lemma}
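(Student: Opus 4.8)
The plan is to reduce the left-hand side to an ordinary supremum of $\|u(s)\|$ over $s \in [-h,T]$ and then split this interval at the origin, where the hypothesis $u_0 = \f$ takes over. The whole argument is pathwise, with the expectation taken only at the very end.

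First I would unfold the definition. Since $u_t = u(t+\theta)$ with $\theta \in [-h,0]$, we have $\|u_t\|_{C_h} = \sup_{\theta \in [-h,0]} \|u(t+\theta)\|$, so that
\[
\sup_{t \in [0,T]} \|u_t\|_{C_h} = \sup_{t \in [0,T]} \ \sup_{\theta \in [-h,0]} \|u(t+\theta)\| = \sup_{(t,\theta) \in [0,T]\times[-h,0]} \|u(t+\theta)\|.
\]
The key observation is that the substitution $s = t + \theta$ maps $[0,T]\times[-h,0]$ onto the full interval $[-h,T]$: every $s \in [0,T]$ is realized by taking $\theta = 0$, and every $s \in [-h,0)$ by taking $t = 0$. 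Continuity of the sample paths guarantees that all these suprema are well-defined and $\mathcal{F}_t$-measurable random variables. Hence the double supremum collapses to
\[
\sup_{t \in [0,T]} \|u_t\|_{C_h} = \sup_{s \in [-h,T]} \|u(s)\|.
\]

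Next I would split the interval at $0$ and invoke $u_0 = \f$. Since $u(s) = \f(s)$ for $s \in [-h,0]$, the supremum over $[-h,0]$ is exactly $\|\f\|_{C_h}$, so
\[
\sup_{s \in [-h,T]} \|u(s)\| = \max\Bigl\{ \|\f\|_{C_h}, \ \sup_{s \in [0,T]} \|u(s)\| \Bigr\}.
\]
Raising to the power $p \geq 1$ and using the elementary bound $\max\{a,b\}^p = \max\{a^p, b^p\} \leq a^p + b^p$ for $a,b \geq 0$ gives the pathwise estimate
\[
\sup_{t \in [0,T]} \|u_t\|_{C_h}^p \leq \|\f\|_{C_h}^p + \sup_{s \in [0,T]} \|u(s)\|^p,
\]
and taking expectations yields the claim.

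I do not expect a genuine analytic obstacle here; the lemma is essentially careful bookkeeping. The only points requiring attention are the surjectivity of $(t,\theta)\mapsto t+\theta$ onto $[-h,T]$, which justifies the collapse of the double supremum, and the measurability of the random suprema — both of which are immediate from the assumed continuity of the paths.
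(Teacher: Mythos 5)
Your argument is correct, and it is exactly the routine verification the paper has in mind when it says the lemma ``follows directly from the definition of $u_t$'': collapse the double supremum onto $[-h,T]$, split at $0$ using $u_0 = \f$, bound the $p$-th power of the maximum by the sum, and take expectations. No gap, and no meaningful difference from the paper's (omitted) proof.
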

The proof of Lemma \ref{L:2.4} follows directly from the definition of $u_t$. We next split the proof into the following steps:\\
 {\bf{Step 1.}} We start with an auxiliary equation. For any $(a,b) \in \mathcal{L}_{\mathcal{F}}^p([0,T],H) \times \mathcal{L}_{\mathcal{F}}^p([0,T],L_2^0)$ consider the following equation
 \begin{equation}\label{eq:3.1}
 \begin{cases}
     d(u(t) + g(u_t)) = [A u(t) + a(t)] dt +b(t) d W(t), t \geq 0;\\
     u(t) = \f(t), t \in [-h,0].
\end{cases}
 \end{equation}
 \begin{lemma}\label{lem:3.1}
 Under the condition {\bf (H3)} the equation (\ref{eq:3.1}) has a unique mild solution in $\mathcal{B}_{p,T}(\f)$.
 \end{lemma}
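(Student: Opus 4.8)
The plan is to recast (\ref{eq:3.1}) as a fixed-point problem and invoke the Banach contraction principle. Since in the auxiliary equation $a$ and $b$ are \emph{given} processes (independent of $u$), the variation-of-constants formula of Definition \ref{Def:mild} suggests defining, for $u \in \mathcal{B}_{p,T}(\f)$,
\[
(\Psi u)(t) := S(t)(\f(0)+g(\f)) - g(u_t) - \int_0^t AS(t-s) g(u_s)\,ds + \int_0^t S(t-s) a(s)\,ds + \int_0^t S(t-s) b(s)\,dW(s)
\]
for $t \in [0,T]$, and $(\Psi u)(t)=\f(t)$ for $t\in[-h,0]$. A mild solution in $\mathcal{B}_{p,T}(\f)$ is precisely a fixed point of $\Psi$, and at $t=0$ one checks $(\Psi u)(0)=\f(0)$, so the paths match across the junction. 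The crucial structural observation is that the only $u$-dependence of $\Psi u$ sits in the neutral term $g(u_t)$ and in the singular convolution $\int_0^t AS(t-s)g(u_s)\,ds$; the remaining three terms form a fixed forcing.

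First I would verify that $\Psi$ maps $\mathcal{B}_{p,T}(\f)$ into itself. The term $S(t)(\f(0)+g(\f))$ is continuous and, using $\|S(t)\|\le C_0 e^{-\delta t}$ from Lemma \ref{L:2.2} together with the linear growth of $g$ implied by {\bf (H3)}, has finite $\mathcal{B}_{p,T}$-norm. The deterministic convolution with $a$ is handled by H\"older's inequality and Lemma \ref{L:2.2}, while the stochastic convolution with $b$ is controlled by Lemma \ref{L:2.3} (its continuity being standard, e.g.\ via the factorization method). The delicate point is the singular integral: writing $AS(t-s)g(u_s) = -(-A)^{1-\alpha}S(t-s)(-A)^{\alpha}g(u_s)$ and applying Lemma \ref{L:2.2} gives
\[
\|AS(t-s)g(u_s)\| \le C_{1-\alpha}(t-s)^{-(1-\alpha)} e^{-\delta(t-s)} \|g(u_s)\|_{H_\alpha}.
\]
Since {\bf (H3)} places $g(u_s)$ in $H_\alpha$ with $\|g(u_s)\|_{H_\alpha}$ of linear growth, and $1-\alpha<1$, the kernel singularity $(t-s)^{-(1-\alpha)}$ is integrable; a generalized Minkowski/Young estimate then bounds this term in $\mathcal{B}_{p,T}$-norm, and continuity in $t$ follows by dominated convergence.

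For the contraction I would use that, for $u,v\in \mathcal{B}_{p,T}(\f)$, the forcing cancels and
\[
(\Psi u)(t)-(\Psi v)(t) = -\bigl(g(u_t)-g(v_t)\bigr) - \int_0^t AS(t-s)\bigl(g(u_s)-g(v_s)\bigr)\,ds.
\]
The second term is estimated as above and, carrying the factor $\int_0^t (t-s)^{-(1-\alpha)}e^{-\delta(t-s)}\,ds \le C_{1-\alpha}T^{\alpha}/\alpha$, contributes a coefficient that tends to $0$ as $T\downarrow 0$; by Lemma \ref{L:2.4} the passage from $\|u(t)-v(t)\|$ to $\|u_t-v_t\|_{C_h}$ costs nothing, since $u,v$ share the initial datum and hence their difference has zero datum on $[-h,0]$. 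The first term is genuinely local in time and, by the Lipschitz bound in {\bf (H3)}, contributes a coefficient governed by $M_g$; this is exactly why the hypothesis $M_g<1$ is the right one. Thus $\Psi$ is a contraction on a short interval $[0,\tau]$ once $\tau$ is small enough that the integral coefficient drops below $1-M_g$, and iterating over successive subintervals $[k\tau,(k+1)\tau]$ (restarting from the already-constructed solution as new datum) and gluing yields the unique fixed point on all of $[0,T]$.

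I expect the main obstacle to be the contraction constant for the neutral term: unlike the convolution pieces, $g(u_t)$ gains no smallness from a short time horizon, so the whole argument hinges on extracting a coefficient that is \emph{exactly} $M_g$, rather than $M_g$ inflated by $\|(-A)^{-\alpha}\|$ or by a splitting factor $4^{p-1}$ as in \cite{24}. Keeping this constant sharp --- by estimating $g$ in the $H_\alpha$ norm furnished by {\bf (H3)} and avoiding a lossy split of the four terms --- is the only genuinely nontrivial part; the remainder is a routine application of Lemmas \ref{L:2.2}--\ref{L:2.4}. Throughout I work under the running assumption $\E\|\f\|_{C_h}^p<\infty$, deferring its removal to the next step of the proof of Theorem \ref{Th:2.1}.
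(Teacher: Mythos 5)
Your proposal is correct and follows essentially the same route as the paper: the same fixed-point map $\Psi$, the same factorization $AS(t-s)g(u_s)=-(-A)^{1-\alpha}S(t-s)(-A)^{\alpha}g(u_s)$ to tame the singularity, the same observation that the difference $\Psi u-\Psi v$ involves only the two $g$-terms so that the contraction constant is $M_g$ plus an integral contribution vanishing as the time step shrinks (the paper realizes your ``non-lossy split'' via the weighted inequality $(x+y)^p\le x^p/\rho^{p-1}+y^p/(1-\rho)^{p-1}$ with $\rho=M_g$), and the same continuation over finitely many subintervals.
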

 \begin{proof}
 Define the following operator $\Psi: \mathcal{B}_{p,T}(\f) \to \mathcal{B}_{p,T}(\f)$ as
 \begin{equation}\label{eq:3.2}
 [\Psi u](t):=S(t) (\f(0)+g(\f)) -  g(u_t) - \int_{0}^t A  S(t-s) g(u_s) ds + \int_{0}^t   S(t-s) a(s) ds + \int_0^t S(t-s) b(s) dW(s), 
 \end{equation}
 defined for $t \in [0,T]$, with $u(t) = \f(t)$ for $t \in [-h,0]$. Let us show that $\Psi:\mathcal{B}_{p,T}(\f) \to \mathcal{B}_{p,T}(\f)$ is a contraction. To this end, let us first show that $\Psi$ takes values in $\mathcal{B}_{p,T}(\f)$. Using Lemma \ref{L:2.4}  we have
 \[
 \E \sup_{t \in [0,T]} \|g(u_t)\|^p \leq \E \sup_{t \in [0,T]} \|g(u_t)\|^p_{\a} \leq M_g^p (\|g(0)\|_{C_h}^p +\E \sup_{t \in [0,T]} \|u(t)\|^p).
 \]
 Next,
 \begin{eqnarray*}
 & & \E \sup_{t \in [0,T]} \left\|\int_0^t A S(t-s) g(u_s) ds\right\|^p = \E \sup_{t \in [0,T]} \left\|\int_0^t (-A)^{1-\a} S(t-s)(-A)^{\a} g(u_s) ds\right\|^p \\
 & &  
 \leq \E \sup_{t \in [0,T]} \left( \int_0^t \left\|(-A)^{1-\a} S(t-s) \right\| \left\|(-A)^{\a} g(u_s) \right\| ds\right)^p \\
 & &\leq \sup_{t \in [0,T]} \left(\int_0^t C_{1-\alpha} (t-s)^{-(1-\alpha)} ds \right)^p \E \sup_{t \in [0,T]} \|g(u_t)\|^p_{\a}.
 \end{eqnarray*}
The remaining two terms can be estimated in a standard way (see, e.g. \cite{24}):
\begin{equation*}\label{est1}
    \E \sup_{t \in [0,T]} \left\| \int_0^t  S(t-s) a(s) ds \right\|^p \leq C_0^p \|a\|^p_{H,T}
\end{equation*}
and
\begin{equation*}\label{est2}
\E \sup_{t \in [0,T]} \left\| \int_0^t  S(t-s) b(s) d W(s) \right\|^p \leq C_0^p \|b\|^p_{L_2^0,T},
\end{equation*}
where $C_0$ is given by (\ref{eq:exp_est}).
 
 Let us now show that $\Psi$ is contraction. Fix $T_1 \in [0,T]$  For any $\rho \in (0,1)$, and $a, b\geq 0$ we will use the inequality 
 \[
 (a+b)^p \leq \frac{a^p}{\rho^{p-1}} + \frac{b^p}{(1-\rho)^{p-1}}.
 \]
 For any $u,v \in B_{p,T_1}$, we have
 \begin{eqnarray*}
 & &\left\| \Psi(u) - \Psi(v) \right\|^p_{B_{p,T_1}} \leq \frac{1}{M_g^{p-1}}\|g(u_t) - g(v_t)\|^p_{B_{p,T_1}} \\
  & & + \frac{1}{(1-M_g)^{p-1}}\left\|\int_0^t A S(t-s)(g(u_s)-g(v_s)) ds \right\|^p_{B_{p,T_1}} 
\leq \frac{1}{M_g^{p-1}}\E \sup_{t \in [0,T_1]}\|g(u_t) - g(v_t)\|^p  \\
 & & + \frac{1}{(1-M_g)^{p-1}} \E \sup_{t \in [0,T_1]} \left(\int_0^t \|(-A)^{1-\a} S(t-s)\| \|(-A)^\a (g(u_s)-g(v_s)\| ds\right)^p \\
& &\leq M_g \E \sup_{t \in [0,T_1]} \|u_t-v_t\|^p_{C_h} + 
\frac{M_g^p C_{1-\alpha}^p T_1^{\a p}}{(1-M_g)^{p-1} \a^p}  \E \sup_{t\in [0,T_1]}\|u_t-v_t\|^p_{C_h} = \gamma(T_1) \E \sup_{t\in [0,T_1]}\|u_t-v_t\|^p_{C_h}.
 \end{eqnarray*}
 Here we used Lemma \ref{L:2.4} and Lemma \ref{L:2.2}. Using the assumption {\bf (H3)} we can choose $T_1$ such that
 \begin{equation}\label{cond1}
 \gamma(T_1):= M_g +  \frac{M_g^p  C_{1-\alpha}^p T_1^{\a p}}{(1-M_g)^{p-1} \a^p} < 1,
 \end{equation}
 hence the operator $\Psi$ is a contraction in $\mathcal{B}_{p,T_1}(\f)$. Therefore $\Psi$ has a unique fixed point, which is a unique fixed point, which is a mild solution of (\ref{eq:3.1}) on $[0,T_1]$. 
 
 Let us show that the solution has a continuous trajectory. It is straightforward to verify the continuity of the first, third and fourth terms in the equation (\ref{eq:3.2}). The continuity of the fifth term follows from the factorization method \cite{13}, Prop. 7.3. 
 
 Let us now establish the continuity of the second term. Indeed, since $u$ is continuous,
 \begin{equation*}
 \|g(u_{t+s}) - g(u_t)\| \leq M_g \|u_{t+s} - u_t\|_{C_h} =  \sup_{\theta \in [-h,0]} \|u(t+s+\theta) - u(t+\theta)\| \to 0, s \to 0 \ a.s.
 \end{equation*}
 We may now consider the new initial value problem with the initial condition on $[T_1-h, T_1]$ and so on. This way the solution may be extended to the entire interval $[0,T]$ in  finitely many steps. Hence the equation (\ref{eq:3.1}) has a unique mild solution, defined on $[0,T]$, which completes the proof of the Lemma. 
 \end{proof}
 {\bf Step 2.} For given $\f \in C_h$ we define the operator 
 \[
 \Phi_{\f}: L^{p}_{\F}([0,T],H) \times L^{p}_{\F}([0,T],L_2^0) \to \B_{p,T}
 \]
 as 
 \begin{equation*}
 (\Phi_{\f}(a,b))(t) := 
 \begin{cases}
 \f(t), t \in [-h,0];\\
 u(t), t \in [0,T],
 \end{cases}
 \end{equation*}
 where $u$ is the unique solution of (\ref{eq:3.1}).

 \begin{lemma}\label{lem:3.2}
 Under the assumption {\bf (H3)}, for every $p > 2$, there are positive constants $B$, $D$ and $L$, depending only on $T$, $M_g$, $p$ and $\alpha$, such that for any $t \in [0,T]$, for any $\f$ and $\psi$ in $C_h$, and for all pairs 
 \[
 (a,b), (a_1,b_1) \text{ and } (a_2,b_2) \in  L^{p}_{\F}([0,T],H) \times L^{p}_{\F}([0,T],L_2^0), 
 \]
 we have
 \begin{eqnarray}
 \label{eq:3.3} & &\|\Phi_{\f}(a_1,b_1) - \Phi_{\psi}(a_2,b_2)\|^p_{\B_{p,t}} \leq L \|\f -\psi\|^p_{C_h}  \\
\nonumber  & & +  B \int_0^t\left[\E \|a_1(s)-a_2(s)\|^p + \E \|b_1(s)-b_2(s)\|^p_{L_2^0}\right] ds, \ \ \text{ and }\\
 \label{eq:3.4} & & 
 \|\Phi(a,b)\|^p_{\B_{p,t}} \leq D + B\int_0^t\left[\E \|a(s)\|^p + \E\|b(s)\|^p_{L_2^0}\right] ds.
 \end{eqnarray}
 \end{lemma}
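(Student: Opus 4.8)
The plan is to estimate the solution map $\Phi_\f(a,b)$ term by term using the mild-solution representation (\ref{eq:3.2}), exploiting the smoothing estimates of Lemma \ref{L:2.2} and the stochastic maximal inequality of Lemma \ref{L:2.3}. For the difference estimate (\ref{eq:3.3}), I would write $u = \Phi_\f(a_1,b_1)$ and $v = \Phi_\psi(a_2,b_2)$ and subtract the two copies of (\ref{eq:3.2}). The resulting expression splits into five differences: the semigroup term $S(t)((\f(0)-\psi(0)) + (g(\f)-g(\psi)))$, the pointwise term $-(g(u_t)-g(v_t))$, the singular convolution $-\int_0^t AS(t-s)(g(u_s)-g(v_s))\,ds$, the drift convolution $\int_0^t S(t-s)(a_1(s)-a_2(s))\,ds$, and the stochastic convolution $\int_0^t S(t-s)(b_1(s)-b_2(s))\,dW(s)$. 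First I would bound each of the first three ``$g$-dependent'' terms in $\B_{p,t}$-norm by a constant times $\|\f-\psi\|^p_{C_h}$ plus a contribution controlled by $\E\sup_{r\le s}\|u_r - v_r\|^p_{C_h}$, using Condition (H3) and the factorization $AS(t-s) = (-A)^{1-\a}S(t-s)(-A)^{\a}$ exactly as in the contraction argument of Lemma \ref{lem:3.1}; the exponent $\a$ guarantees $(t-s)^{-(1-\a)}$ is integrable. The last two terms I would bound by $B\int_0^t[\E\|a_1-a_2\|^p + \E\|b_1-b_2\|^p_{\L}]\,ds$ via Lemma \ref{L:2.2} (using $\|S(t-s)\|\le C_0$) and Lemma \ref{L:2.3} respectively, together with Hölder's inequality to convert the time integral into the $L^p$ form.

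The crucial subtlety is that the raw estimate produces, on the right-hand side, a term of the form $\gamma\,\E\sup_{s\le t}\|u_s - v_s\|^p_{C_h}$ with $\gamma = \gamma(t)$ possibly close to or exceeding $1$ for large $t$, which is exactly the quantity $\|u - v\|^p_{\B_{p,t}}$ appearing on the left (after invoking Lemma \ref{L:2.4} to pass between $\sup_t\|u_t\|_{C_h}$ and $\sup_t\|u(t)\|$). To absorb it I would fix the small interval length $T_1$ from (\ref{cond1}) so that $\gamma(T_1)<1$ and run the estimate first on $[0,T_1]$, where $(1-\gamma(T_1))^{-1}$ yields the desired inequality with constants $L$, $B$ depending only on $T_1, M_g, p, \a$. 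I would then iterate over the finitely many subintervals $[0,T_1],[T_1,2T_1],\dots$ that cover $[0,T]$ — using the semigroup/cocycle structure to re-initialise on each piece — accumulating the constants into final $L, B$ depending only on $T, M_g, p, \a$. This absorption-plus-iteration step is the main obstacle, since one must keep careful track that the constants stay finite and depend only on the stated parameters, and that Lemma \ref{L:2.4} is applied correctly to relate the $C_h$-norm of the shift $u_t$ to the pathwise supremum.

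The growth estimate (\ref{eq:3.4}) follows the same template but is simpler: I would set $v\equiv 0$, $\psi\equiv 0$, $a_2=b_2=0$ in the preceding computation, and use the linear-growth bound $\|g(u_t)\|\le\|g(u_t)\|_\a\le M_g\|u_t\|_{C_h}+\|g(0)\|$ (again via (H3)) to handle the $g$-terms. The initial-data and $\|g(0)\|$ contributions, being independent of $(a,b)$, are collected into the constant $D$, while the two convolution terms again give the $B\int_0^t[\E\|a\|^p+\E\|b\|^p_{\L}]\,ds$ contribution; the self-referential $\gamma\,\|u\|^p_{\B_{p,t}}$ term is absorbed and iterated exactly as before. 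I expect no new difficulty here beyond bookkeeping, so the heart of the proof remains the contraction-style absorption in (\ref{eq:3.3}).
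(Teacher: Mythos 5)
Your plan matches the paper's proof in all essentials: the same five-term decomposition of $\Phi_\f(a_1,b_1)-\Phi_\psi(a_2,b_2)$, the same use of {\bf (H3)} with the factorization $AS(t-s)=(-A)^{1-\a}S(t-s)(-A)^{\a}$ and Lemmas \ref{L:2.2}--\ref{L:2.4}, and the same absorption of the self-referential $\E\sup\|u_s-v_s\|^p_{C_h}$ term on a short interval $T_1$ followed by iteration over the finitely many subintervals covering $[0,T]$ (the paper imposes the slightly adjusted smallness condition (\ref{cond2}) rather than (\ref{cond1}) itself, since five terms share the weight, but this is exactly the bookkeeping you anticipate). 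The only cosmetic difference is that the paper obtains (\ref{eq:3.4}) by specializing (\ref{eq:3.3}) to $a_2=b_2=0$ rather than re-running the estimate, which changes nothing of substance.
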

 \begin{proof} For any $(a_1,b_1)$ and  $(a_2,b_2)$ we have
 \begin{multline*}
\Phi_{\f}(a_1,b_1)  := u = S(t) (\f(0)+g(\f)) -  g(u_t) - \int_{0}^t A  S(t-s)g(u_s) ds \\
+\int_{0}^t S(t-s) a_1(s) ds + \int_0^t  S(t-s) b_1(s) dW(s),
 \end{multline*}
 and
 \begin{multline*}
\Phi_{\psi}(a_2,b_2)  := v = S(t) (\psi(0)+g(\psi)) -  g(v_t) - \int_{0}^t A  S(t-s)g(v_s) ds \\
+ \int_{0}^t   S(t-s) a_2(s) ds + \int_0^t  S(t-s) b_2(s) dW(s).
 \end{multline*}
 Denote
 \[
 \beta(s) := \E \|a_1(s)-a_2(s)\|^p + \E \|b_1(s)-b_2(s)\|^p_{L_2^0}.
 \]
 Now choose $T_1>0$, which satisfies both (\ref{cond1}) and
 \begin{equation}\label{cond2}
   M_g + \left(\frac{5}{1-M_g}\right)^{p-1}\left(\frac{C_{1-\alpha} T_1^\a M_g}{\a}\right)^p<1. \end{equation}
Note that such $T_1$ always exists provided $M_g<1$. Next, we partition the interval $[0,T]$ with the points $kT_1$, and consider the interval $t \in [0,T_1]$ first. For such $t$ we have
\begin{eqnarray}\label{eq:3.6}
\nonumber & &\|u-v\|_{\B_{p,t}}^p = \E \sup_{s\in[0,t]}\|u(s)-v(s)\|^p \leq \E \sup_{s \in [0,t]} \Big[\|S(s)(g(\f) - g(\psi))\|  + \|S(s)(\f(0) - \psi(0))\| \\
\nonumber & & + \|g(u_s)-g(v_s)\| + \left\|\int_0^s A S(s-\tau)(g(u_\tau) - g(v_\tau)) d\tau \right\| +  \left\|\int_0^s S(s-\tau) (a_1(\tau) - a_2(\tau)) d\tau \right\| \\
& & + \left\|\int_0^s S(s-\tau) (b_1(\tau) - b_2(\tau)) dW(\tau) \right\|\Big]^p \leq \frac{1}{M_g^{p-1}}\E \sup_{s \in [0,t]}\|g(u_s) - g(v_s)\|^p \\
\nonumber & & + \left(\frac{5}{1-M_g}\right)^{p-1} C_0^p(1 + M_g^p) \|\f-\psi\|_{C_h}^p  \\
\nonumber & & + \left(\frac{5}{1-M_g}\right)^{p-1} \E \sup_{s \in [0,t]} \left(\int_0^s \|(-A)^{1-\a} S(s-\t)\| \|(g(u_\t)-g(v_\t)\|_{\a} d\t \right)^p  \\
\nonumber & &
+C_{0}^p \left(\frac{5}{1-M_g}\right)^{p-1} \int_0^t \beta(s) ds,
\end{eqnarray}
where $C_0$ is given in  (\ref{eq:exp_est}).
Let us estimate each term in (\ref{eq:3.6}) separately. 
\begin{eqnarray*}\label{eq:3.7}
 \nonumber  & & \E \sup_{s \in [0,t]} \|g(u_s)-g(v_s)\|^p \leq  \E \sup_{s \in [0,t]} \|g(u_s)-g(v_s)\|^p_\a \leq M_g^p \E  \sup_{s \in [0,t]} \|u_s-v_s\|^p_{C_h} \\
  & & = M_g^p \E \sup_{s \in [0,t]} \sup_{\theta \in [-h,0]} \|u(s+\theta)- v(s+\theta))\|^p =  M_g^p \E \sup_{s \in [-h,t]}\|u(s)-v(s)\|^p\\
 \nonumber & & \leq  M_g^p \|\f - \psi\|_{C_h}^p + M_g^p \E \sup_{s \in [0,t]}\|u(s)-v(s)\|^p.
\end{eqnarray*}
Next,
\begin{multline}\label{eq:3.8}
\E \sup_{s \in [0,t]} \left(\int_0^s \|(-A)^{1-\a} S(s-\t)\| \|(g(u_\t)-g(v_\t)\|_{\a} d\t \right)^p \leq \left(\frac{M_g C_{1-\a} T_1^\a}{\a}\right)^p \E \sup_{s \in [-h,t]}\|u(s)-v(s)\|^p\\
\leq \left(\frac{M_g C_{1-\a} T_1^\a}{\a}\right)^p \left(\E \sup_{s \in [0,t]}\|u(s)-v(s)\|^p + \|\f - \psi\|_{C_h}^p \right).
\end{multline}
Using (\ref{eq:3.6}) - (\ref{eq:3.8}), we have
\begin{multline*}
\|u-v\|^p_{\B_{p,t}} \leq M_g \E \sup_{s \in [0,t]} \|u(s)-v(s)\|^p + \left(\frac{5}{1-M_g}\right)^{p-1} \left(\frac{M_g C_{1-\a} T_1^\a}{\a}\right)^p \E \sup_{s \in [0,t]} \|u(s)-v(s)\|^p  \\
+ \left(\frac{5}{1-M_g}\right)^{p-1} C_{0}^p \int_{0}^{t}\beta(s)ds + \tilde{L}_1 \|\f-\psi\|^p_{C_h},
\end{multline*}
or
\begin{equation}\label{eq:3.9}
 \|u-v\|^p_{\B_{p,t}} \leq C_{T_1}  \int_0^t \beta(s) ds + L_1 \|\f-\psi\|^p_{C_h},
\end{equation}
where 
\[
C_{T_1} = \frac{\left(\frac{5}{1-M_g}\right)^{p-1}}{1-M_g - \left(\frac{5}{1-M_g}\right)^{p-1} \left(\frac{M_g C_{1-\a} T_1^\a}{\a}\right)^p} C_{0}^p
\]
and
\[
L_1 = \left(\frac{5}{1-M_g}\right)^{p-1}\frac{C_0^p + M_g^p C_0^p + M_g^p +  \left(\frac{M_g C_{1-\a} T_1^\a}{\a}\right)^p}{1-M_g - \left(\frac{5}{1-M_g}\right)^{p-1} \left(\frac{M_g C_{1-\a} T_1^\a}{\a}\right)^p}.
\]

Let us now consider the second interval $t \in [T_1, 2 T_1]$. We have
 \begin{multline} \label{eq:3.10}
u(t) = S(t-T_1) (u(T_1)+g(u_{T_1})) -  g(u_t) - \int_{T_1}^t A  S(t-s) g(u_s) ds \\
+\int_{T_1}^t  S(t-s) a_1(s) ds + \int_{T_1}^t  S(t-s) b_1(s) dW(s),
 \end{multline}
 and
 \begin{multline*}
v(t) = S(t-T_1) (v(T_1)+g(v_{T_1})) -  g(v_t) - \int_{T_1}^t A  S(t-s)g(v_t) ds \\
+ \int_{T_1}^t   S(t-s) a_2(s) ds + \int_{T_1}^t  S(t-s) b_2(s) dW(s).
 \end{multline*}
 Then
 
 \begin{eqnarray}\label{eq:3.11}
\nonumber \|u-v\|_{\B_{p,t}}^p = & &  \E \sup_{s \in [0,t]}\|u(s) - v(s)\|^p   \\
& &= \max\{\E \sup_{s \in [0,T_1]} \|u(s) - v(s)\|^p, \E \sup_{s \in [T_1,t]} \|u(s) - v(s)\|^p\} \\
\nonumber & &\leq \E \sup_{s \in [0,T_1]} \|u(s) - v(s)\|^p + \E \sup_{s \in [T_1,t]} \|u(s) - v(s)\|^p.
 \end{eqnarray}
 Let us now estimate the second term in (\ref{eq:3.11}). From (\ref{eq:3.10}) we have
 \begin{eqnarray}\label{eq:3.12}
  \nonumber & &\E \sup_{s \in [T_1,t]} \|u(s) - v(s)\|^p \leq \E \sup_{s \in [T_1,t]}\Big( \|g(u_s) - g(v_s)\| + \|S(s-T_1)\| (\|u(T_1)-v(T_1)\| \\ 
  \nonumber & &+\|g(u_{T_1}) - g(v_{T_1})\|) + \int_{T_1}^s C_{1-\a}(s-\t)^{-(1-\a)} M_g \|u_{\t} - v_{\t}\|_{C_h} d \t + \\ & &\left\|\int_{T_1}^s S(s-\t)(a_1(\t)-a_2(\t)) d\t \right\| + \left\|\int_{T_1}^s S(s-\t)(b_1(\t)-b_2(\t)) dW(\t) \right\|\Bigg)^p  \\
  \nonumber & & \leq \frac{1}{M_g^{p-1}} \E \sup_{s \in [T_1,t]} \|g(u_s) - g(v_s)\|^p + \frac{5^{p-1}}{(1-M_g)^{p-1}} \Big( C_{0}^p \Big(\E \|u(T_1) - v(T_1)\|^p \\
  \nonumber & & \E \|g(u_{T_1}) - g(v_{T_1})\|^p\Big)\Big) + \E \sup_{s \in [T_1,t]} \left(\int_{T_1}^s C_{1-\a}(s-\t)^{-(1-\a)} M_g \|u_{\t} - v_{\t}\|_{C_h} d \t \right)^p + C_{0}^p \int_{T_1}^t \beta(\t) d\t.
 \end{eqnarray}
 Setting $t = T_1$ in (\ref{eq:3.9}), we have
 \begin{equation*}\label{eq:3.13}
  \E \|u(T_1)-v(T_1)\|^p \leq C_{T_1} \int_0^{T_1} \beta(s) ds + L_1 \|\f- \psi\|_{C_h}^p
 \end{equation*}
 and
 \begin{multline*}\label{eq:3.14}
    \E \|g(u_{T_1})-g(v_{T_1})\|^p \leq  M_g^p \E \sup_{\theta \in [-h,0]}  \|u(T_1+\theta) - v(T_1 + \theta)\|^p  \\
    \leq M_g^p \E \sup_{s \in [-h,T_1]} \|u(s)-v(s)\|^p \leq M_g^p C_{T_1} \int_0^{T_1} \beta(s) ds + L_{21} \|\f- \psi\|_{C_h}^p .
 \end{multline*}
 Using the result of Lemma \ref{L:2.4}, we proceed with the following estimate:
 \begin{eqnarray*}\label{eq:3.15}
   \nonumber & & \E \sup_{s \in [T_1,t]} \|g(u_{s})-g(v_{s})\|^p \leq  M_g^p \E \sup_{s \in [T_1,t]} \sup_{\theta \in [-h,0]} \|u(s+\theta)- v(s+\theta)\|^p \\
   & &\leq M_g^p \E \sup_{ s \in [T_1-h,t]}\|u(s)- v(s)\|^p \leq M_g^p \max\{ \E \sup_{s \in [0,T_1]}\|u(s)- v(s)\|^p,  \E \sup_{s \in [T_1,t]}\|u(s)- v(s)\|^p\}  \\
    \nonumber & & \leq M_g^p C_{T_1} \int_0^{T_1} \beta(s) ds + M_g^p \E \sup_{s \in [T_1,t]}\|u(s)- v(s)\|^p + M_g^p L_1 \|\f-\psi\|_{C_h}^p.
 \end{eqnarray*}
 We next estimate the integral term in (\ref{eq:3.12}). Using Lemma \ref{L:2.4} once again, we have

 \[
 \E \sup_{s \in [T_1,t]} \left(\int_{T_1}^s \frac{C_{1-\a}}{(s-\t)^{1-\a}} M_g \|u_{\t} - v_{\t}\|_{C_h} d \t \right)^p
 \]
 \[
 \leq \left(\frac{M_g C_{1-\a} T_1^\a}{\a}\right)^p \ \E \sup_{s \in [T_1,t]} ( \sup_{\t \in [T_1,s]} ( \sup_{\theta \in [-h,0]} \|u(\t +\theta) - v(\t + \theta)\|^p)) 
 \]
 \begin{equation}\label{eq:3.16}
     =  \left(\frac{M_g C_{1-\a} T_1^\a}{\a}\right)^p \E \sup_{s \in [T_1-h, t]} \|u(s)- v(s)\|^p \leq \left(\frac{M_g C_{1-\a} T_1^\a}{\a}\right)^p C_{T_1} \int_0^{T_1} \beta(s) ds 
 \end{equation}
 \[
 + \left(\frac{M_g C_{1-\a} T_1^\a}{\a}\right)^p\ \E \sup_{s \in [T_1, t]} \|u(s)- v(s)\|^p + L_{22}\|\f-\psi\|^p_{C_h}.
 \]

It follows from (\ref{eq:3.12})-(\ref{eq:3.16}) that
\begin{eqnarray*}
 & &\E \sup_{s \in [T_1, t]} \|u(s)- v(s)\|^p \leq M_g C_{T_1} \int_0^{T_1} \beta(s) ds + M_g \E \sup_{s \in [T_1, t]} \|u(s)- v(s)\|^p  \\
 & & \left(\frac{5}{1-M_g}\right)^{p-1}
 C_{0}^p C_{T_1} \int_0^{T_1} \beta(s) ds + \left(\frac{5}{1-M_g}\right)^{p-1} C_{0}^p C_{T_1} M_g^p \int_0^{T_1} \beta(s) ds  \\ 
 & & +\left(\frac{5}{1-M_g}\right)^{p-1} \left(\frac{M_g C_{1-\a} T_1^\a}{\a}\right)^p \E \sup_{s \in [T_1, t]} \|u(s)- v(s)\|^p  \\
 & & + \left(\frac{5}{1-M_g}\right)^{p-1}  C_0^p \int_{T_1}^t \beta(s) ds + L_{23} \|\f-\psi\|_{C_h}^p,  
 \end{eqnarray*}
 or,
 \begin{eqnarray*}
 & &\E \sup_{s \in [T_1, t]} \|u(s)- v(s)\|^p \left(1- M_g - \left(\frac{5}{1-M_g}\right)^{p-1} \left(\frac{M_g C_{1-\a} T_1^\a}{\a}\right)^p \right)   \\
 & & \leq \left(M_g C_{T_1} +  \left(\frac{5}{1-M_g}\right)^{p-1} C_0^p C_{T_1}  + \left(\frac{5}{1-M_g}\right)^{p-1} C_{0}^p C_{T_1} M_g^p \right) \int_0^{T_1}  \beta(s) ds \\
 & & +  \left(\frac{5}{1-M_g}\right)^{p-1} C_0^p \int_{T_1}^t  \beta(s) ds + L_{23} \|\f-\psi\|_{C_h}^p.
 \end{eqnarray*}
 In view of (\ref{eq:3.11}), we conclude that
 \[
 \|u-v\|_{\B_{p,t}}^p \leq C_2(T_1) \int_{0}^{t} \beta(s) ds + L_2(T_1) \|\f-\psi\|_{C_h}^p,
 \]
 where
 \[
 C_2(T_1) = \frac{\tilde{C_2}(T_1)}{1- M_g - \left(\frac{5}{1-M_g}\right)^{p-1} \left(\frac{M_g C_{1-\a} T_1^\a}{\a}\right)^p}
 \]
 with
 \[
 \tilde{C}_2(T_1) = C_{T_1}  M_g+  \left(\frac{5}{1-M_g}\right)^{p-1} C_0^p [C_{T_1} + C_{T_1} M_g^p + 1].
 \]
 Repeating the arguments on $[kT_1, (k+1)T_1]$, we have
 \[
 \|u-v\|_{\B_{p,t}}^p \leq C_k(T_1) \int_{0}^{t} \beta(s) ds + L_k(T_1)\|\f-\psi\|_{C_h}^p,
 \]
 where
 \[
 C_k(T_1) = \frac{\tilde{C_k}(T_1)}{1- M_g - \left(\frac{5}{1-M_g}\right)^{p-1} \left(\frac{M_g C_{1-\a} T_1^\a}{\a}\right)^p}
 \]
 and $\tilde{C}_k(T_1)$ depends only on $C_i(T_1), i = 1,...,k-1$, and, therefore, is well defined for $T_1$ satisfying the conditions (\ref{cond1}) and (\ref{cond2}). Similarly, $L_k(T_1)$ depends only on $L_i(T_1), i = 1,...,k-1$, and, therefore, is also well defined for $T_1$ satisfying the conditions (\ref{cond1}) and (\ref{cond2}). Since the interval $[0,T]$ consists of $N$ intevals of length $T_1$ (with $T_1$ dependent only on $M_{g}$), (\ref{eq:3.3}) follows with $B = \max_{0 \leq k \leq N-1} C_k$. Finally, the inequality (\ref{eq:3.4}) follows from (\ref{eq:3.3}) by setting $a_2= b_2 =0$.
\end{proof}
{\bf Construction of the approximation sequence.} The existence and uniqueness of mild solution of (\ref{eq:1.1}) will be established using Picard type iteration scheme. For fixed $T>0$, let $u^{(0)}$ be a solution of (\ref{eq:3.1})  with $a \equiv b \equiv 0.$ We may now define
\begin{multline}\label{eq:3.18}
u^{(n)}(t) = S(t)(\f(0) + g(\f))  - g(u_t^{(n)})   - \int_0^t AS(t-s)g(u_s^{n}) ds  + \int_0^t S(t-s) f(u_s^{n-1}) ds  \\
+ \int_0^t S(t-s) \s(u_s^{n-1}) d W(s), \ t \in [0,T]
\end{multline}
with $u^{(n)}(t) = \f(t)$ for $t \in [-h,0]$. It was shown in \cite{24} that if the estimates  (\ref{eq:3.3}) and (\ref{eq:3.4}) hold, the iteration scheme (\ref{eq:3.18}) converges to the unique mild solution of (\ref{eq:1.1}). Hence the proof of Theorem \ref{Th:2.1} is complete in the case, when $\E \|\f\|^p_{C_h}<\infty.$ In general case, we may define a sequence of cut-off functions in spirit of \cite{13}, Th.7.4. 
\[
\f_n:=
\begin{cases}
\f, \ \text{ if } \|\f\|_{C_h} \leq n\\
0, \ \text{ if } \|\f\|_{C_h} > n.
\end{cases}
\]
Let $u_n$ be the solution of (\ref{eq:1.1}) with the initial condition $\f_n.$ By \cite{13}, Th.7.4, the sequence $u_n$ converges to the process $u(t)$, which solves (\ref{eq:1.1}). This completes the proof of Theorem \ref{Th:2.1}.

\subsection{Proof of Theorem \ref{Th:2.2}.} Using Lemma \ref{lem:3.2}, we have
\begin{equation*}\label{eq:3.19}
    \E \sup_{s \in [0,t]} \|u^{(n)}(s)\|^p \leq K_1 + C_1 \E \|\f\|^p_{C_h} + K \int_0^t \E \sup_{\t \in [0,s]} \|u^{(n-1)}(s)\|^p d\t.
\end{equation*}
An analogous estimate holds for the solution $u(t)$ as well. Hence, by Gronwall inequality we have
\begin{equation}\label{eq:3.20}
\E \sup_{s \in [0,t]} \|u(s)\|^p \leq (K_1+C_1 \E \|\f\|^p_{C_h}) e^{Kt}.
\end{equation}
Next, for two solutions $u(t)$ and $v(t)$ with random initial conditions $\f$ and $\psi$ respectively, using Lemma \ref{lem:3.2} we have
\begin{eqnarray*}\label{eq:3.21}
 & & \E \sup_{s \in [0,t]} \|u(s) - v(s)\|^{p} \leq L \E \|\f-\psi\|^p_{C_h} + B \int_0^t  \E N(\sup_{\t \in [0,s]} \|u_{\t} - v_\t\|^p) d s \\
\nonumber & & \leq L \E \|\f-\psi\|^p_{C_h} + B t N(\|\f-\psi\|^p_{C_h}) + B \int_0^t \E N(\sup_{\t \in [0,s]} \|u(\t) - v(\t)\|^p) ds,
\end{eqnarray*}
where we implicitly used the fact that $\sup_{\tau \in [-h,s]}  N(\|u(\t) - v(\t)\|^p)$ is attained either  for $\t \in [-h,0]$ or for $\t \in [0,s]$, and hence
\begin{multline*}
\E N(\sup_{\t \in [-h,s]} \|u(\t) - v(\t)\|^p) = \max\{\E N(\sup_{\t \in [-h,0]} \|u(\t) - v(\t)\|^p), \E N(\sup_{\t \in [0,s]} \|u(\t) - v(\t)\|^p) \} \\
\leq \E N(\sup_{\t \in [-h,0]} \|u(\t) - v(\t)\|^p) + \E N(\sup_{\t \in [0,s]} \|u(\t) - v(\t)\|^p).
\end{multline*}

It follows from (\ref{eq:3.20}) and that if $\E \|\f_n-\f\|^p_{C_h} \to 0, n \to \infty$, we have
\[
\limsup_{n \to \infty} \E \sup_{s \in [0,t]} \|u(s,\f_n) - u(s,\f)\|^p \leq B \int_0^t N(\limsup_{n \to \infty}\E \sup_{\t \in [0,s]}\|u(\t,\f_n) - u(\t,\f)\|^p) ds.
\]
Thus, from the condition {\bf (H2)(c)}, we have
\[
\limsup_{n \to \infty} \E \sup_{s \in [0,t]} \|u(s,\f_n) - u(s,\f)\|^p = 0,
\]
which completes the proof of Theorem \ref{Th:2.2}.

\subsection{Proof of Theorem \ref{Th:2.4}.} The idea of the proof will be based on Krylov-Bogoliubov compactness approach \cite{13}. Let $u(t)$ be the solution of (\ref{eq:1.1}), which satisfies the condition (\ref{eq:2.7}). Our goal is to show that the family of distributions $\mathcal{L}(u_t), t \geq T$, with $T\geq 2h$, is tight. To this end, we will show that for any $\ve>0$ there is a compact $K_{\ve} \subset C_h$, such that \begin{equation}\label{eq:3.22}
P\{u_t \in K_{\ve}\} > 1 -\ve.
\end{equation}
We have

\begin{eqnarray}\label{eq:3.23}
u_T = u(T+\theta) = S(T+\theta)(\f(0) + g(\f))  - g(u_{T+\theta})   - \int_0^{T +\theta} AS(T+ \theta -s)g(u_s) ds   \\ 
\nonumber + \int_0^{T+\theta} S(T+\theta-s) f(u_s) ds + \int_0^{T+\theta} S(T+ \theta -s) \sigma(u_s) d W(s).
\end{eqnarray}

\begin{lemma}\label{lem:3.3}
Let $\f(t,\omega):C_h \times \Omega \to C_h$ be a random process, such that for any $\gamma>0$
\begin{equation}\label{eq:3.24}
    \lim_{\sigma \to 0} P\{\omega: \sup_{t,s \in [-h,0], |t-s|< \sigma} \|\f(t,\omega) - \f(s,\omega)\|_{C_h} > \gamma\} = 0.
\end{equation}
Then for any $\ve>0$ there exists $B_{\ve} \in \Omega$ such that:\\
\begin{enumerate}
    \item $P\{B_\ve\} > 1 - \ve$;
    \item the family of functions $\{\f(t,\omega), \omega \in B_\ve\}$ is equicontinuous on $[-h,0]$.
\end{enumerate}
\end{lemma}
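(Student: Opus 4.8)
The plan is to read hypothesis (\ref{eq:3.24}) as a statement of \emph{stochastic equicontinuity} — the modulus of continuity of the random path $t\mapsto\f(t,\omega)$ tends to zero in probability — and then to upgrade it to genuine pathwise equicontinuity off a set of arbitrarily small probability by a summable union-bound argument. To organize this, I would first introduce, for $\sigma>0$, the modulus of continuity
\[
w_\f(\sigma,\omega):=\sup_{t,s\in[-h,0],\,|t-s|<\sigma}\|\f(t,\omega)-\f(s,\omega)\|_{C_h},
\]
which is a nonnegative random variable; its measurability follows from the continuity of the sample paths, since the supremum may be taken over a countable dense set of pairs $(t,s)\in[-h,0]^2$. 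In this notation the hypothesis reads simply: for every $\gamma>0$ one has $P\{w_\f(\sigma,\cdot)>\gamma\}\to 0$ as $\sigma\to 0$.

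Next I would fix $\ve>0$ and exploit the hypothesis along the sequence $\gamma=1/n$. For each $n\in\N$, since $P\{w_\f(\sigma,\cdot)>1/n\}\to 0$ as $\sigma\to 0$, there exists $\sigma_n>0$ with
\[
P\left(A_n\right)<\frac{\ve}{2^n},\qquad A_n:=\left\{\omega:\ w_\f(\sigma_n,\omega)>\tfrac1n\right\}.
\]
Setting $B_\ve:=\bigcap_{n=1}^{\infty}A_n^{c}=\Omega\setminus\bigcup_{n=1}^{\infty}A_n$, countable subadditivity gives
\[
P(\Omega\setminus B_\ve)\le\sum_{n=1}^{\infty}P(A_n)<\sum_{n=1}^{\infty}\frac{\ve}{2^n}=\ve,
\]
so $P(B_\ve)>1-\ve$, which is the first assertion.

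For the second assertion I would check uniform equicontinuity of the family $\{\f(\cdot,\omega):\omega\in B_\ve\}$ directly from the construction. If $\omega\in B_\ve$, then $\omega\notin A_n$ for every $n$, i.e. $w_\f(\sigma_n,\omega)\le 1/n$ for all $n$. Hence, given any $\gamma>0$, choosing $n$ with $1/n<\gamma$ forces $\|\f(t,\omega)-\f(s,\omega)\|_{C_h}\le 1/n<\gamma$ whenever $|t-s|<\sigma_n$, and crucially this holds with the \emph{same} $\sigma_n$ for every $\omega\in B_\ve$ simultaneously. That is precisely equicontinuity of the family on the compact interval $[-h,0]$.

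The argument is essentially measure-theoretic and short, so I do not anticipate a genuine obstacle. The one point deserving care is the measurability of $w_\f(\sigma,\cdot)$, so that the events $A_n$ are well defined and the hypothesis is even meaningful; I would dispatch this by reducing the supremum to a countable dense set of time-pairs and invoking continuity of the paths. A minor caveat worth recording is that the lemma produces, for each $\ve$, a single good set $B_\ve$; if one instead wanted a single full-measure set carrying equicontinuous paths, one would intersect $B_{1/k}$ over $k\in\N$, but that strengthening is not required for the tightness estimate (\ref{eq:3.22}) for which the lemma is intended.
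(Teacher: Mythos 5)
Your proposal is correct and is essentially the paper's own proof: the same union-bound construction with $\gamma=1/n$ and $P(A_n)<\ve/2^n$, the same good set $B_\ve=\bigcap_n A_n^c$, and the same verification of equicontinuity by picking $n>1/\gamma$ and using the common $\sigma_n$. Your added remark on the measurability of the modulus of continuity (via a countable dense set of time-pairs) is a small point the paper leaves implicit, but otherwise the two arguments coincide.
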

\begin{proof}
It follows from (\ref{eq:3.24}) that for any $\ve>0$ there exists $\{\delta_k, k \geq 1\}$ such that $\delta_k \to 0$ and 
\begin{equation}\label{eq:3.25}
    P\{\omega: \sup_{t,s \in [-h,0], |t-s|< \sigma} \|\f(t,\omega) - \f(s,\omega)\|_{C_h} > \frac{1}{k}\}\leq \frac{\ve}{2^k}.
\end{equation}
Denote 
\[
A_k:=\{\omega: \sup_{t,s \in [-h,0], |t-s|< \sigma} \|\f(t,\omega) - \f(s,\omega)\|_{C_h} > \frac{1}{k}\}.
\]
It follows from (\ref{eq:3.25}) that
\[
P\{\cup_{k \geq 1} A_k\} \leq \sum_{k=1}^\infty P(A_k) \leq \ve.
\]
Set 
\[
B:=\overline{\cup_{k \geq 1}A_k} = \cap_{k \geq 1}\overline{A_k} = \cap_{k \geq 1} \{\omega: \sup_{t,s \in [-h,0], |t-s|< \sigma} \|\f(t,\omega) - \f(s,\omega)\|_{C_h} \leq \frac{1}{k}\}.
\]
Then $P\{B\}>1-\ve$, and if $\omega \in B$, then $\omega \in \overline{A_k}$, and for any $\gamma>0$ choose $k > \frac{1}{\gamma}$, which would imply $\|\f(t,\omega) - \f(s,\omega)\|_{C_h} \leq \frac{1}{k} < \gamma$ for $|t-s|<\delta_k$. This completes the proof of Lemma \ref{lem:3.3}.
\end{proof}

\begin{lemma}\label{lem:3.4}
Let $D$ be a set of functions $\f_t \in C([-h,0], C_h)$ with the following properties:
\begin{enumerate}
    \item there is $R>0$ such that
    \begin{equation*}\label{eq:3.26}
    \|\f_0\|_{C_h} \leq R \text{ for any } \f_t \in D.
    \end{equation*}
    \item the set of functions $D$ is equicontinous on $[-h,0]$ in $t$.
\end{enumerate}
Then the set $\{g(\f_t), \f \in D\}$ is compact in $C_h$.
\end{lemma}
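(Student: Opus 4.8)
The plan is to read the claim as a statement about the $C_h$-valued trajectories $t\mapsto g(\f_t)$, $\f\in D$, and to prove that the set $\{\,t\mapsto g(\f_t): \f\in D\,\}$ is relatively compact (precompact) in $C_h=C([-h,0],H)$ via the Arzel\`a--Ascoli theorem; compactness of its closure then follows. Recall that for maps of the compact interval $[-h,0]$ into the Banach space $H$, relative compactness in $C([-h,0],H)$ follows from two ingredients: (i) equicontinuity of the family in $t$, and (ii) relative compactness in $H$, for each fixed $t\in[-h,0]$, of the section $\{g(\f_t):\f\in D\}$.

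First I would extract uniform boundedness of $D$ from the two hypotheses. Taking $\ve=1$ in the equicontinuity hypothesis yields a $\dl>0$ with $\|\f_t-\f_s\|_{C_h}<1$ whenever $|t-s|<\dl$, uniformly in $\f\in D$. Covering $[-h,0]$ by at most $\lceil h/\dl\rceil$ subintervals of length $<\dl$ and chaining this estimate starting from $t=0$, the bound $\|\f_0\|_{C_h}\le R$ gives a constant $R_1$ with $\sup_{\f\in D}\sup_{t\in[-h,0]}\|\f_t\|_{C_h}\le R_1$. The Lipschitz bound {\bf (H3)} then produces a uniform $H_\a$-estimate,
\[
\|g(\f_t)\|_{H_\a}\le \|g(\f_t)-g(0)\|_{H_\a}+\|g(0)\|_{H_\a}\le M_g R_1+\|g(0)\|_{H_\a},
\]
so the whole set $\{g(\f_t):\f\in D,\ t\in[-h,0]\}$ is bounded in $H_\a$.

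For ingredient (ii) I invoke Lemma \ref{L:2.1} with $\b=0$: the embedding $H_\a\subset H_0=H$ is compact, so the $H_\a$-bounded section $\{g(\f_t):\f\in D\}$ is relatively compact in $H$ for each fixed $t$. For ingredient (i), since the embedding $H_\a\subset H$ is in particular continuous there is $c>0$ with $\|\cdot\|\le c\|\cdot\|_{H_\a}$; hence by {\bf (H3)},
\[
\|g(\f_t)-g(\f_s)\|\le c\,\|g(\f_t)-g(\f_s)\|_{H_\a}\le c\,M_g\,\|\f_t-\f_s\|_{C_h}.
\]
Because $D$ is equicontinuous in $t$, the right-hand side is uniformly small once $|t-s|$ is small, which is exactly equicontinuity of the family $\{t\mapsto g(\f_t)\}$. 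Combining (i) and (ii), Arzel\`a--Ascoli gives relative compactness of $\{g(\f_t):\f\in D\}$ in $C_h$.

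The equicontinuity step is routine, being an immediate consequence of the Lipschitz property of $g$ together with the equicontinuity of $D$. The step I would single out as the crux is the pointwise precompactness in $H$: it is precisely the \emph{compactness} of the embedding $H_\a\subset H$ from Lemma \ref{L:2.1} that upgrades the mere $H_\a$-boundedness of the sections into $H$-precompactness. This is exactly where the regularizing assumption $g:C_h\to H_\a$ with $\a>0$ in {\bf (H3)} is essential; without the gain of fractional regularity the sections would only be bounded in $H$ and the argument would break down.
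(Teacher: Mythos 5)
Your proposal is correct and follows essentially the same route as the paper: Arzel\`a--Ascoli in $C([-h,0],H)$, with pointwise precompactness of the sections obtained from the $H_\alpha$-bound via the compact embedding of Lemma \ref{L:2.1}, and equicontinuity of $t\mapsto g(\f_t)$ from the Lipschitz property in {\bf (H3)}. Your treatment is in fact slightly more careful than the paper's, since you make explicit the chaining argument that upgrades the bound on $\|\f_0\|_{C_h}$ to a uniform bound on $\sup_{t}\|\f_t\|_{C_h}$, a step the paper only asserts.
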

\begin{proof}
We will apply the infinite dimensional version of Arzela-Ascoli Theorem. To this end, we need to establish
\begin{itemize}
    \item[{[i]}] For fixed $t \in [-h,0]$, $\{g(\f_t), \f \in D\}$ is compact in $H$;
    \item[{[ii]}] for any $\ve > 0$, there is $\delta>0$, such that 
    \[
    \|g(\f_t) - g(\f_s)\| < \ve, \text{ if } |t-s|<\delta, \ \ t,s \in [-h,0].
    \]
\end{itemize}
It follows from the definition of $D$ that there is $C>0$ such that
\[
\sup_{t \in [-h,0]}\|\f_t\|_{C_h} \leq C \text{ for any } \f_t \in D.
\]
Using {\bf (H3)} we have
\[
\|g(\f_t)\|_{\a} \leq M_g \|\f_t\|_{C_h} + \|g(0)\| \leq M_g C + \|g(0)\|.
\]
This way Lemma \ref{L:2.1} implies that the set $D$ is bounded in $H_\a$ and therefore is compact in $H$.  Furthermore, for all $\ve>0$ we have
\[
\|g(\f_t)-g(\f_s)\| \leq \|g(\f_t)-g(\f_s)\|_{\a} \leq M_g \|\f_t-\f_s\|_{C_h},
\]
which completes the proof of Lemma. 
\end{proof}
\begin{lemma}\label{lem:3.5}
The operator 
\begin{equation*}\label{eq:3.27}
    [B \f](\theta):=\int_0^{T+\theta} A S(T+\theta -s) \f(s) ds
\end{equation*}
is a compact operator from $C([0,T],H_{\a})$ into $C_h$. 
\end{lemma}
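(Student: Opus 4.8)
The plan is to show that $B$, which is clearly linear, maps the unit ball of $C([0,T],H_\a)$ to a relatively compact subset of $C_h$, and to verify relative compactness through the infinite-dimensional Arzel\`a--Ascoli theorem, exactly as in Lemma \ref{lem:3.4}. So I would let $\f$ range over the unit ball of $C([0,T],H_\a)$ and reduce the claim to proving, uniformly in such $\f$: (i) that for each fixed $\theta \in [-h,0]$ the set $\{[B\f](\theta)\}$ is relatively compact in $H$; and (ii) that the family $\{[B\f]\}$ is equicontinuous in $\theta$ with respect to the $H$-norm. Throughout I would write $A=-(-A)$ and factor $AS(t)=-(-A)^{1-\a}S(t)(-A)^{\a}$, so that the smooth factor $(-A)^{\a}\f(s)$ has norm at most $1$ while the singular factor is handled by Lemma \ref{L:2.2}. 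Note that $(T+\theta-s)^{-(1-\a)}$ is integrable on $[0,T+\theta]$ precisely because $\a>0$, which is exactly what makes $B\f$ well defined as a Bochner integral.

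For (i) the key idea is to gain extra spatial regularity. I would fix $\b$ with $0<\b<\a$ and apply $(-A)^{\b}$ under the integral sign (justified by closedness of $(-A)^\b$ together with integrability of the resulting integrand), obtaining
\[
(-A)^{\b}[B\f](\theta)=-\int_0^{T+\theta}(-A)^{1+\b-\a}S(T+\theta-s)(-A)^{\a}\f(s)\,ds.
\]
By Lemma \ref{L:2.2} the norm of $(-A)^{1+\b-\a}S(T+\theta-s)$ is bounded by $C_{1+\b-\a}(T+\theta-s)^{-(1+\b-\a)}$, whose exponent is $<1$ exactly because $\b<\a$; integrating then yields a bound on $\|(-A)^{\b}[B\f](\theta)\|$ that is uniform in $\theta\in[-h,0]$ (using $T+\theta\le T$) and in $\f$. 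Hence $\{[B\f](\theta)\}$ is bounded in $H_\b$, and the compact embedding $H_\b\subset H$ furnished by Lemma \ref{L:2.1} delivers relative compactness in $H$.

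For (ii) I would take $-h\le\theta_1<\theta_2\le 0$, set $a_i=T+\theta_i$ and $\t=\theta_2-\theta_1$, and split
\[
[B\f](\theta_2)-[B\f](\theta_1)=\int_0^{a_1}A\bigl(S(a_2-s)-S(a_1-s)\bigr)\f(s)\,ds+\int_{a_1}^{a_2}AS(a_2-s)\f(s)\,ds=:I_1+I_2.
\]
The term $I_2$ is immediate from the factorization above: $\|I_2\|\le C_{1-\a}\int_{a_1}^{a_2}(a_2-s)^{-(1-\a)}\,ds=C_{1-\a}\t^{\a}/\a$. The hard part will be $I_1$, and this is where I expect the main obstacle to lie. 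Here I would use the semigroup law $S(a_2-s)-S(a_1-s)=S(a_1-s)(S(\t)-I)$ together with the classical H\"older bound for analytic semigroups, $\|(S(\t)-I)(-A)^{-\g}\|\le C_\g\t^{\g}$ for $0<\g\le 1$, and distribute the fractional powers as
\[
A\bigl(S(a_2-s)-S(a_1-s)\bigr)\f(s)=-(-A)^{1-\a+\g}S(a_1-s)\,(S(\t)-I)(-A)^{\a-\g}\f(s),
\]
choosing $0<\g<\a$ so that, by Lemma \ref{L:2.2}, the exponent $1-\a+\g$ stays $<1$ and $(a_1-s)^{-(1-\a+\g)}$ remains integrable. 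Since $\|(S(\t)-I)(-A)^{\a-\g}\f(s)\|\le C_\g\t^{\g}\|(-A)^{\a}\f(s)\|\le C_\g\t^{\g}$, integrating in $s$ over $[0,a_1]\subset[0,T]$ gives $\|I_1\|\le C\t^{\g}$. Combining, $\|[B\f](\theta_2)-[B\f](\theta_1)\|\le C(\t^{\a}+\t^{\g})\to0$ as $\t\to0$, uniformly over the unit ball, which is the desired equicontinuity. The only ingredient beyond the lemmas already established is the H\"older estimate for $(S(\t)-I)$, which is standard for analytic semigroups and available from the same source as Lemma \ref{L:2.2}.
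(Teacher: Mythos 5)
Your proof is correct, but it takes a genuinely different route from the paper's in both halves of the Arzel\`a--Ascoli argument. For pointwise relative compactness the paper truncates the integral at $T+\theta-\ve$, factors out the compact operator $S(\ve)$ to see that each truncation $B^{\ve}$ is compact, and then passes to the limit in operator norm using the integrability of $(T+\theta-s)^{\alpha-1}$ near the endpoint; you instead gain a fractional degree of regularity, bounding $(-A)^{\beta}[B\f](\theta)$ for $0<\beta<\alpha$ and invoking the compact embedding $H_{\beta}\subset H$ of Lemma \ref{L:2.1}. Both are legitimate; yours is shorter and yields the extra information that the image is uniformly bounded in $H_{\beta}$. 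For equicontinuity the paper estimates $J_{2}$ as you do but handles $J_{1}$ by the dominated convergence theorem, using only strong continuity of $AS(\cdot)x$; you instead use the H\"older bound $\|(S(\tau)-I)(-A)^{-\gamma}\|\le C_{\gamma}\tau^{\gamma}$ for analytic semigroups, at the price of importing one standard fact not stated among the paper's lemmas. This buys you something real: your modulus $C(\tau^{\alpha}+\tau^{\gamma})$ is uniform over the unit ball of $C([0,T],H_{\alpha})$, which is exactly the uniform equicontinuity that Arzel\`a--Ascoli requires, whereas the paper's dominated-convergence argument as written gives the decay of $J_{1}$ only for each fixed $\f$ and leaves the uniformity over the ball implicit. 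So your version is, if anything, the more careful of the two.
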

\begin{proof}
We will also use the infinite dimensional analog of Arzela-Ascoli Theorem. For fixed $\theta \in [-h,0]$ and $\ve>0$ denote
\[
[B^\ve \f](\theta):=\int_0^{T+\theta-\ve} A S(T+\theta -s) \f(s) ds = \int_0^{T+\theta-\ve} A S(\ve) S(T+\theta -\ve -s) \f(s) ds,
\]
Since $A$ is a generator of analytic semigroup, we have $S(T+\theta -\ve -s) \f(s) \in D(A)$, then
\begin{equation*}\label{eq:3.27}
    \int_{0}^{T+\theta-\ve} A S(\ve) S(T+\theta -\ve -s) \f(s) ds = S(\ve) \int_{0}^{T+\theta-\ve} A S(T+\theta -\ve -s) \f(s) ds.
\end{equation*}
But
\begin{multline*}
 \left\| \int_{0}^{T+\theta-\ve} A S(T+\theta -\ve -s) \f(s) ds\right\| \leq \int_0^{T+\theta-\ve} C_{1-\a} (T+\theta-\ve-s)^{\a-1} ds \sup_{s \in [0,T]} \|\f(s)\|_{\a}\\
 \leq \frac{C_{1-\a} T^{\a}}{\a} \sup_{s \in [0,T]} \|\f(s)\|_{\a}.
\end{multline*}
Since $S(\ve)$ is compact, we conclude that the operators $B^{\ve}$ are compact. We have
\[
\|B^{\ve} \f - B\f\| \leq \int_{T+\theta-\ve}^{T+\theta} C_{1-\a} (T+\theta -s)^{\a-1} ds \sup_{s \in [0,T]} \|\f(s)\|_{\a} \to 0, \ \ve \to 0.
\]
Consequently $B^{\ve}$ converges to $B$ in the operator norm, thus $B$ is compact and the set
\[
\{B \f(\theta): \sup_{t \in [0,T]} \|\f(t)\|_{\alpha} \leq 1\}
\]
is compact in $H$. Fix $\theta$ and $r$, such that $-h \leq \theta \leq \theta + r \leq 0$. Then for $\f$ satisfying $\sup_{t \in [0,T]} \|\f(t)\|_{\a} \leq 1$ we have
\begin{multline*}\label{eq:3.28}
    \|(B \f(\theta + r) - B \f(\theta)\| \leq \left\|\int_0^{T+\theta}(A S(T+\theta +r -s) - AS(T+\theta -s))\f(s) ds \right\| \\
    +  \left\|\int_{T+\theta}^{T+\theta +r} A S(T+\theta +r -s) \f(s) ds \right\| := J_1 + J_2.
\end{multline*}
In view of Lemma \ref{L:2.2}, the estimate for $J_2$ is straightforward:
\[
J_2 \leq \int_{T+\theta}^{T+\theta +r} C_{1-\a} (T+\theta +r -s)^{\a-1} ds \sup_{t \in [0,T]} \| \f(t)\|_{\a} \to 0, r \to 0.
\]
In order to estimate $J_1$, we proceed as follows:
\[
J_1 \leq  \int_0^{T+\theta}\left\|(A S(s+r) - AS(s))\f(T+\theta - s)\right\| ds.
\]
Since $S(s) \f(T+\theta-s) \in D(A)$, using the continuity property of semigroup, we have
\begin{equation*}
(A S(s+r) - A S(s)) \f(T+\theta - s) = \\
S(r) A S(s)\f(T+\theta - s) - A S(s)\f(T+\theta - s) \to 0, r \to 0.
\end{equation*}
Furthermore, 
\[
\|A S(s+r)\f(s)\| \leq \frac{C_{1-\a}}{s^{1-\a}} \sup_{t \in [0,T]}\|\f(t)\|_{\alpha}.
\]
Hence, by Dominated Convergence Theorem, $J_1 \to 0$ as $r \to 0$, which competes the proof of Lemma \ref{lem:3.5}.
\end{proof}
The following result was shown in \cite{18}, Lemma 3.3:
\begin{lemma}\label{lem:3.6}
For any $p>2$ and $\beta \in (\frac{1}{p}, 1]$ the operator 
\begin{equation*}\label{eq:3.29}
    (G_\beta \f)(\theta) = \int_0^{T+\theta} (T+\theta -s)^{\beta-1} S(T+\theta -s) \f(s) ds
\end{equation*}
is a compact operator $L^p([0,T],H)$ to $C_h$.
\end{lemma}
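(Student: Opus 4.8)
The plan is to establish compactness of the operator $G_\beta$ via the infinite-dimensional Arzelà–Ascoli theorem, following the same two-pronged strategy used in the proof of Lemma \ref{lem:3.5}: for each fixed $\theta \in [-h,0]$ one shows that the image set $\{(G_\beta \f)(\theta): \|\f\|_{L^p} \le 1\}$ is precompact in $H$, and then one shows the family $\{G_\beta \f\}$ is equicontinuous in $\theta$ on $[-h,0]$. The key difference from Lemma \ref{lem:3.5} is that the singularity $(T+\theta-s)^{\beta-1}$ is now milder (no factor of $A$), but the input $\f$ lives only in $L^p([0,T],H)$ rather than in $C([0,T],H_\alpha)$, so the compactifying smoothing must come entirely from the semigroup $S(T+\theta-s)$ and its analyticity.

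First I would handle pointwise precompactness in $H$. Fix $\theta$ and introduce the truncated operator
\[
(G_\beta^\ve \f)(\theta) = \int_0^{T+\theta-\ve} (T+\theta-s)^{\beta-1} S(T+\theta-s)\f(s)\,ds
= S(\ve)\int_0^{T+\theta-\ve} (T+\theta-s)^{\beta-1} S(T+\theta-\ve-s)\f(s)\,ds,
\]
exactly as in Lemma \ref{lem:3.5}. Since $S(\ve)$ is compact (by \textbf{(H1)} and Lemma \ref{L:2.1}), each $G_\beta^\ve$ maps bounded sets to precompact sets provided the inner integral is bounded uniformly over $\|\f\|_{L^p}\le 1$; that uniform bound follows from Hölder's inequality, using that $s \mapsto (T+\theta-s)^{\beta-1}$ belongs to $L^q$ with $\tfrac1p+\tfrac1q=1$ precisely when $(\beta-1)q > -1$, i.e. $\beta > \tfrac1p$. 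This is exactly the hypothesis $\beta \in (\tfrac1p,1]$, and it is the place where that restriction is essential. Then I would show $\|G_\beta^\ve \f - G_\beta \f\| \to 0$ as $\ve \to 0$ uniformly in $\|\f\|_{L^p}\le 1$, again by Hölder applied to $\int_{T+\theta-\ve}^{T+\theta}(T+\theta-s)^{\beta-1}\|S(T+\theta-s)\|\,ds$, whose $L^q$-mass on a shrinking interval vanishes; hence $G_\beta$ is a uniform operator-norm limit of compact operators and so has precompact image in $H$ for each fixed $\theta$.

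Next I would establish equicontinuity in $\theta$. Fixing $-h \le \theta \le \theta+r \le 0$, I would split $(G_\beta\f)(\theta+r)-(G_\beta\f)(\theta)$ into a ``tail'' term $J_2 = \int_{T+\theta}^{T+\theta+r}$ and a ``bulk'' term $J_1$ obtained after the substitution $s \mapsto T+\theta-s$. The term $J_2$ is controlled directly by Hölder and the integrability of the kernel near the singularity, vanishing as $r\to 0$. For $J_1$ I would use the strong continuity of $S$ together with the continuity in the kernel exponent: the integrand $(T+\theta-s)^{\beta-1}\big(S(s+r)-S(s)\big)\f(\cdot)$ converges to zero pointwise as $r\to 0$, and is dominated by an $L^1$-integrable majorant (again via $\beta > \tfrac1p$ and Hölder), so the dominated convergence theorem gives $J_1 \to 0$. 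This mirrors the $J_1, J_2$ decomposition in Lemma \ref{lem:3.5} almost verbatim.

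The main obstacle, and the point requiring the most care, is the interplay between the two integrability constraints: the weak singularity of the fractional kernel and the fact that $\f$ is only $L^p$. Everything hinges on verifying that $(T+\theta-s)^{\beta-1} \in L^q$, which forces $\beta > \tfrac1p$ and simultaneously delivers both the uniform bounds for pointwise precompactness and the dominating function for equicontinuity. Since this is cited as a known result from \cite{18}, Lemma 3.3, I would keep the argument brief and lean on the structural parallel with Lemma \ref{lem:3.5}, taking care only to track where the exponent condition $\beta \in (\tfrac1p,1]$ is used rather than reproving the semigroup estimates, which are already supplied by Lemma \ref{L:2.2}.
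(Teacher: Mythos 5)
Your proposal is essentially correct and follows the intended route: the paper itself gives no proof of Lemma \ref{lem:3.6}, simply citing \cite{18}, Lemma 3.3, and the (suppressed) draft argument in the source is exactly your strategy --- truncate to $G_\beta^\ve$, factor out the compact operator $S(\ve)$, pass to the operator-norm limit, and then verify equicontinuity in $\theta$ via a $J_1,J_2$ splitting, with the hypothesis $\beta>\tfrac1p$ entering precisely through the $L^q$-integrability of $(T+\theta-s)^{\beta-1}$ under H\"older. The one step to tighten is equicontinuity: as written, your dominated convergence argument for $J_1$ gives $J_1\to0$ for each \emph{fixed} $\f$, whereas Arzel\`a--Ascoli requires this uniformly over $\{\|\f\|_{L^p}\le1\}$. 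The fix is to apply dominated convergence not to the vector-valued integrand but to the operator-norm kernel, showing
\[
\int_0^{T+\theta}\bigl\|(T+\theta+r-s)^{\beta-1}S(T+\theta+r-s)-(T+\theta-s)^{\beta-1}S(T+\theta-s)\bigr\|^{q}\,ds\to0,\qquad r\to0,
\]
using the norm continuity of the analytic semigroup at positive times and the majorant $2C_0(T+\theta-s)^{\beta-1}\in L^q$ (valid since $\beta-1\le0$); H\"older then yields the uniform bound. With that adjustment the argument is complete.
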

For $\f \in C_h$, $\psi, \eta \in L^p([0,T], H)$, and $\xi \in C([0,T],H)$, introduce
\[
\mu[\f,\xi,\psi,\eta]:= S(T+\theta) (\f(0)+g(\f)) - g(u_t) + \int_{0}^{T+\theta} A  S(T + \theta -s) \xi(s) ds + (G_1 \psi)(\theta) + (G_{\beta} \eta)(\theta),
\]
and denote
\[
K(r):=\{\mu[\f,\xi,\psi,\eta] \in C_{h}: \|\f\|_{C_h} \leq r, \sup_{t \in [0,T]} \|\xi\| \leq r, \|\psi\|_{L^p([0,T], H)} \leq r, \|\eta\|_{L^p([0,T], H)} \leq r\}.
\]
It follows from Lemma \ref{lem:3.5} and Lemma \ref{lem:3.6} that $K(r)$ is compact in $C_h$. \\
Let $u(t,\f)$ be a solution of (\ref{eq:1.1}) with the initial condition $\f \in C_h$.
\begin{lemma}\label{lem:3.7}
Introduce 
\begin{multline*}
z(\theta):= S(T+\theta) (\f(0)+g(\f)) + \int_{0}^{T+\theta} A  S(T + \theta -s) g(u_s(\f)) ds + \\
\int_{0}^{T+\theta}  S(T + \theta -s) f(u_s(\f)) ds  + \int_{0}^{T+\theta}  S(T + \theta -s) \sigma(u_s(\f)) d W(s).
\end{multline*}
Assume the conditions of Theorem \ref{Th:2.1} hold. Then there is $C>0$ such that for any $r>0$ and for any $\f \in C_h$ with $\|\f\|_{C_r} \leq r$, we have
\begin{equation*}\label{eq:3.30}
P\{z(\theta) \in K(r)\} \geq 1 -  C r^{-p}(1+\|\f\|^p_{C_r}).
\end{equation*}
\end{lemma}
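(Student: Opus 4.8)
The plan is to observe that $z(\theta)$ is, up to the identification of arguments, exactly of the form $\mu[\f,\xi,\psi,\eta]$, so that the inclusion $z\in K(r)$ reduces to four norm bounds on the (random) arguments; these are then controlled by Chebyshev's inequality together with the a priori moment estimate \eqref{eq:3.20}. First I would match terms. The leading term $S(T+\theta)(\f(0)+g(\f))$ coincides with that of $\mu$; the drift convolution is $(B\xi)(\theta)$ with $\xi(s)=g(u_s(\f))$ in the sense of Lemma \ref{lem:3.5}; and $\int_0^{T+\theta}S(T+\theta-s)f(u_s)\,ds=(G_1\psi)(\theta)$ with $\psi(s)=f(u_s(\f))$. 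The only term requiring genuine work is the stochastic convolution, for which I would invoke the factorization method with exponent $\beta\in(\tfrac1p,\tfrac12)$ to write
\[
\int_0^{T+\theta}S(T+\theta-s)\s(u_s)\,dW(s)=(G_\beta\eta)(\theta),\qquad \eta(s):=\frac{\sin\pi\beta}{\pi}\int_0^s (s-r)^{-\beta}S(s-r)\s(u_r)\,dW(r),
\]
so that $\eta\in L^p([0,T],H)$ and $G_\beta$ is the compact operator of Lemma \ref{lem:3.6}. Thus $z=\mu[\f,\xi,\psi,\eta]$ for these choices.

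Since $K(r)$ is the image under $\mu$ of the $r$-ball in the parameter space, the event $\{z\notin K(r)\}$ is contained in the union of the four events on which one of the defining constraints of $K(r)$ fails. The first constraint, $\|\f\|_{C_h}\le r$, holds by hypothesis, so its event is empty, and Chebyshev's inequality gives
\[
P\{z\notin K(r)\}\le r^{-p}\Big(\E\sup_{t\in[0,T]}\|g(u_t)\|_{\a}^p+\E\int_0^T\|f(u_s)\|^p\,ds+\E\int_0^T\|\eta(s)\|^p\,ds\Big).
\]

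It then remains to bound each of the three moments by $C(1+\|\f\|_{C_h}^p)$. For the first, {\bf (H3)} gives $\|g(u_t)\|_{\a}\le M_g\|u_t\|_{C_h}+\|g(0)\|_{\a}$, hence $\E\sup_t\|g(u_t)\|_{\a}^p\le C(1+\E\sup_t\|u_t\|_{C_h}^p)$. For the second, the linear growth {\bf (H2)[i]} yields $\|f(u_s)\|\le K(1+\|u_s\|_{C_h})$, so the integral is $\le C(1+\E\sup_s\|u_s\|_{C_h}^p)$. For the third, I would apply a Burkholder-type bound in the spirit of Lemma \ref{L:2.3}, using $\beta<\tfrac12$ so that the singular kernel $(s-r)^{-\beta}$ is square-integrable, to obtain $\E\int_0^T\|\eta(s)\|^p\,ds\le C\,\E\int_0^T\|\s(u_s)\|_{\L}^p\,ds\le C(1+\E\sup_s\|u_s\|_{C_h}^p)$, again by {\bf (H2)[i]}. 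Finally, \eqref{eq:3.20} combined with Lemma \ref{L:2.4} gives $\E\sup_{t\in[0,T]}\|u_t\|_{C_h}^p\le C(1+\|\f\|_{C_h}^p)$ for nonrandom $\f$, and collecting constants produces the asserted inequality.

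The main obstacle is the treatment of the stochastic convolution, where the exponent $\beta$ must be chosen in the narrow window $(\tfrac1p,\tfrac12)$ so that three requirements hold simultaneously: the factorization identity is valid, the factor process $\eta$ has finite $p$-th moment (this forces $\beta<\tfrac12$ to make the kernel integrable in the Burkholder estimate), and $G_\beta$ maps compactly into $C_h$ (this forces $\beta>\tfrac1p$, Lemma \ref{lem:3.6}); nonemptiness of this window is exactly where $p>2$ is used. Everything else is a routine combination of Chebyshev's inequality with the growth conditions {\bf (H2)}--{\bf (H3)} and the moment bound \eqref{eq:3.20}.
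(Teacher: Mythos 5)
Your proposal is correct and follows essentially the same route as the paper: identify $z$ with $\mu[\f,\xi,\psi,\eta]$ by taking $\xi(s)=g(u_s)$, $\psi(s)=f(u_s)$, and representing the stochastic convolution via the factorization method with $\beta\in(\tfrac1p,\tfrac12)$, then bound $P\{z\notin K(r)\}$ by Chebyshev's inequality using the moment estimates from {\bf (H2)}(i), {\bf (H3)}, the Burkholder-type bound on the factor process, and \eqref{eq:3.20} combined with Lemma \ref{L:2.4}. The paper's proof is the same argument with the same choice of $\beta$ and the same three moment bounds.
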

\begin{proof}
Fix $p>2$ and $\beta \in \left(\frac{1}{p}, \frac{1}{2}\right)$. Using the factorization formula \cite{25}, Th.5.2.5, we have
\begin{multline*}\label{eq:3.31}
z(\theta) = S(T+\theta) (\f(0)+g(\f)) + \int_{0}^{T+\theta} A  S(T + \theta -s) g(u_s(\f)) ds + \\
(G_1 f(u_s))(\theta)  + \frac{\sin(\beta \pi)}{\pi} (G_{\beta} Y(s))(\theta),
\end{multline*}
where
\[
Y(s) = \int_0^s (s-\t)^{-\beta} S(s-\t) \sigma(u_\t) d W(\tau).
\]
Lemma 7.2 \cite{9} yields
\begin{multline*}\label{eq:3.32}
\E  \int_{0}^{T}\|Y(s)\|^p ds = \E  \int_{0}^{T} \left\|  \int_0^s (s-\t)^{-\beta} S(s-\t) \sigma(u_\t) d W(\tau)\right\|^p \\
\leq C_0^p \E \int_{0}^{T} \left(\int_0^s (s-\t)^{-2\beta}\|\sigma(u_{\tau}\|^2_{\mathcal{L}_2^0} d\t \right)^{p/2} ds.
\end{multline*}
Using Hausdorff-Young inequality and the condition {\bf (H2)}(i), we get
\[
\E \int_0^{T} \|Y(s)\|^p ds \leq C_{21} \left(\int_0^T t^{-2\beta} dt\right)^{\frac{p}{2}} \left(1+ \E \sup_{t \in [0,T]} \|\f\|^p_{C_h}\right).
\]
Hence using (\ref{eq:3.20}) and Lemma \ref{L:2.4} we have
\begin{equation*}\label{eq:3.33}
\E \int_0^{T} \|Y(s)\|^p ds \leq C_{22} \left(1+ \|\f\|^p_{C_h}\right).
\end{equation*}
Similarly,
\begin{equation*}\label{eq:3.34}
\E \int_0^{T} \|f(u_s)\|^p ds \leq C_{23} \left(1+ \E \|\f\|^p_{C_h}\right).
\end{equation*}
Furthermore, using the condition {\bf (H3)}, (\ref{eq:3.20}) and Lemma \ref{L:2.4} we have
\begin{equation*}\label{eq:3.35}
\E \sup_{s \in [0,T]} \|g(u_s(\f))\|^p_{\a} \leq  C_{24} \left(1+ \|\f\|^p_{C_h}\right).
\end{equation*}
This way
\begin{multline*}
P\{z(\theta) \notin K(r)\} \leq P\{\|f(u_s(\f))\|_{L^p([0,T],H)} > r\} + P \{\|Y(s)\|_{L^p([0,T],H)} > \frac{\pi r}{\sin(\beta \pi)}\}\\
+ P\{\sup_{s \in [0,\t]} \|g(u_s(\f))\|_{\a} > r\} \leq r^{-p} C \left(1+ \E \|\f\|^p_{C_h}\right),
\end{multline*}
which completes the proof of Lemma \ref{lem:3.7}.
\end{proof}
\begin{lemma}\label{lem:3.8}
The random process $u_{T+\theta}$ satisfies (\ref{eq:3.24}) if $\E \|u_0\|^p_{C_h} < \infty$.
\end{lemma}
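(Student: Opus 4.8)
The plan is to reduce condition (\ref{eq:3.24}) for the $C_h$-valued process $\theta\mapsto u_{T+\theta}$, $\theta\in[-h,0]$, to a statement about the modulus of continuity of the scalar-time trajectory $t\mapsto u(t)$. Since $\|u_{T+\theta_1}-u_{T+\theta_2}\|_{C_h}=\sup_{\eta\in[-h,0]}\|u(T+\theta_1+\eta)-u(T+\theta_2+\eta)\|$, writing $r_i=T+\theta_i+\eta$ gives
\[
\sup_{\substack{\theta_1,\theta_2\in[-h,0]\\ |\theta_1-\theta_2|<\sigma}}\|u_{T+\theta_1}-u_{T+\theta_2}\|_{C_h}\le \sup_{\substack{r_1,r_2\in[T-2h,T]\\ |r_1-r_2|<\sigma}}\|u(r_1)-u(r_2)\|=:\omega_u(\sigma),
\]
using $T\ge 2h$. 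Hence it suffices to show that $\omega_u(\sigma)\to 0$ in probability as $\sigma\to 0$, and that is what I would establish.

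To control $\omega_u$ I would first remove the neutral term, which couples the increment of $u$ at time $t$ to increments of the whole history segment $u_t$. Setting $v(t):=u(t)+g(u_t)$, the mild equation (\ref{DefMild}) shows that
\[
v(t)=S(t)(\f(0)+g(\f))-\int_0^t AS(t-s)g(u_s)\,ds+\int_0^t S(t-s)f(u_s)\,ds+\int_0^t S(t-s)\s(u_s)\,dW(s)
\]
consists only of the four ``regular'' terms. From $u=v-g(u_\cdot)$ and {\bf (H3)} one gets, pathwise, $\|u(r_1)-u(r_2)\|\le\|v(r_1)-v(r_2)\|+M_g\|u_{r_1}-u_{r_2}\|_{C_h}$, and taking suprema over the relevant compact interval yields $\omega_u(\sigma)\le\omega_v(\sigma)+M_g\,\omega_u(\sigma)$; since $M_g<1$ this gives $\omega_u(\sigma)\le(1-M_g)^{-1}\omega_v(\sigma)$ (the part of the history dipping into $[-h,0]$ being controlled by the assumed continuity of $\f$ in {\bf (H4)}). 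The problem thus reduces to the modulus of continuity of $v$.

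To show $\omega_v(\sigma)\to0$ in probability I would establish a H\"older-type moment bound $\E\|v(t+\dl)-v(t)\|^p\le C\,\dl^{\kappa}(1+\E\|\f\|^p_{C_h})$ for the increments, uniformly in $t$. The hypothesis $\E\|u_0\|^p_{C_h}<\infty$ enters precisely here: via the a priori bound (\ref{eq:3.20}) it makes $\E\sup_s\|u(s)\|^p<\infty$, hence $\E\|f(u_s)\|^p$, $\E\|\s(u_s)\|^p_{\L}$ (by {\bf (H2)}[i]) and $\E\|g(u_s)\|^p_{\a}$ (by {\bf (H3)}) finite, so $C<\infty$. The four terms are treated as follows: the semigroup term via $\|(S(\dl)-I)S(t)x\|\le C\dl^{\a}\|(-A)^{\a}S(t)x\|$ together with Lemma \ref{L:2.2} and $g(\f)\in H_\a$; the singular convolution by factoring $AS=(-A)^{1-\a}S\,(-A)^{\a}$, splitting the time integral into the overlap and the thin slab $[t,t+\dl]$, and applying Lemma \ref{L:2.2} so the singularity $(t-s)^{-(1-\a)}$ stays integrable; the drift convolution the same way with {\bf (H2)}[i]; and the stochastic convolution by the factorization method (Lemma \ref{L:2.3}). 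Such a bound then feeds Kolmogorov's criterion to give a version of $v$ with a.s. modulus of continuity tending to $0$, whence $\omega_v(\sigma)\to0$ a.s. and in probability. (Alternatively, since $v$ already has a.s. continuous paths by Theorem \ref{Th:2.1}, it is a.s. uniformly continuous on the compact interval, so $\omega_v(\sigma)\downarrow0$ a.s. and dominated convergence gives $P\{\omega_v(\sigma)>\g\}\to0$.)

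The main obstacle is the increment of the singular convolution $\int_0^t AS(t-s)g(u_s)\,ds$: near $s=t$ the norm $\|(-A)^{1-\a}S(t-s)\|$ blows up like $(t-s)^{-(1-\a)}$, so comparing the integrals over $[0,t]$ and $[0,t+\dl]$ requires splitting into an overlapping part, controlled by time-H\"older continuity of $s\mapsto(-A)^{1-\a}S(s)$ bounded away from $0$, and a remainder on the thin slab, controlled by integrability of the singularity. The stochastic convolution raises the analogous difficulty, which is exactly why the factorization method rather than a direct It\^o estimate is needed. The self-referential neutral term is the other delicate point, but it is disarmed cleanly by the passage to $v$ and the strict bound $M_g<1$ from {\bf (H3)}.
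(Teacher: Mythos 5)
Your proposal is correct in substance, but your main route is genuinely different from, and considerably heavier than, the paper's. After the same initial reduction (the $C_h$-modulus of $\theta\mapsto u_{T+\theta}$ is dominated by the $H$-modulus of $t\mapsto u(t)$ on a compact interval, using $T\ge 2h$), the paper simply observes that $u$ already has a.s.\ continuous paths by Theorem \ref{Th:2.1}, hence is a.s.\ uniformly continuous on $[0,T]$, so the modulus $\sup_{s}\|u(s+\mu)-u(s)\|^p$ tends to $0$ a.s.; the hypothesis $\E\|u_0\|_{C_h}^p<\infty$ enters only to provide, via \eqref{eq:3.20}, an integrable dominating function $2^p\sup_{[0,T+1]}\|u\|^p$, so that dominated convergence upgrades this to $L^1$ convergence of the $p$-th power, and Chebyshev gives \eqref{eq:3.24}. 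Your main route instead passes to $v=u+g(u_\cdot)$, derives H\"older-type increment bounds for each of the four terms, and invokes Kolmogorov's criterion; this is workable and would deliver a quantitative (H\"older) modulus rather than mere convergence in probability, but it forces you to confront the increment estimates for the singular convolution and the stochastic convolution, which the paper avoids entirely, and the self-bounding step $\omega_u\le\omega_v+M_g\omega_u$ needs the moduli taken over a window enlarged by $h$ (with the initial segment handled by continuity of $\f$), a wrinkle you acknowledge but should state carefully. Note that your parenthetical ``alternative'' is essentially the paper's proof, except that even the detour through $v$ is unnecessary there: one can apply a.s.\ uniform continuity plus dominated convergence directly to $u$.
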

\begin{proof}
For any $\gamma>0$ Chebyshev's inequality yields 
\[
P\{\sup_{\theta_1, \theta_2 \in [-h,0]}\|u_{T+\theta_1} - u_{T+\theta_2}\|_{C_h}>\gamma\} \leq \frac{1}{\gamma^p} \E \sup_{\theta_1, \theta_2 \in [-h,0]}\|u_{T+\theta_1} - u_{T+\theta_2}\|_{C_h}^p.
\]
Let $\mu:=\theta_2 - \theta_1$. Then
\[
\sup_{\theta \in [-h,0]}\|u_{T+\theta+\mu} -u_{T+\theta}\|^p_{C_h} \leq \sup_{\theta \in [-h,0]} \sup_{s \in [T-h,T]} \|u(s+\theta+\mu) -u(s+\theta)\|^p \leq \sup_{s \in [0,T]}\|u(s+\mu) - u(s)\|^p,
\]
since $T>2h.$ So $u(s,\omega)$ is continuous in $H$ with probability 1 on $[0,T]$. Therefore $u(s)$ is uniformly continuous on $[0,T]$, so
\begin{equation*}\label{eq:3.36}
\sup_{s \in [0,T]}\|u(s+\mu) - u(s)\|^p \to 0 \text{ a.s. as } \mu \to 0. \end{equation*}
Since
\[
\E \sup_{s \in [0,T]}\|u(s+\mu)\|^p \leq \E \sup_{s \in [0,T+1]}\|u(s)\|^p < \infty,
\]
by Dominated Convergence Theorem, we obtain
\[
\E \sup_{s \in [0,T]}\|u(s+\mu) - u(s)\|^p \to 0 \text{ as } \mu \to 0,
\]
hence
\[
\E \sup_{\theta_1,\theta_2 \in [-h,0], |\theta_1-\theta_2|<\delta} \|u_{T+\theta_1}-u_{T+\theta_2}\|_{C_h}^p \to 0, \text{ as } \delta \to 0,
\]
which completes the proof of the Lemma.
\end{proof}
We now return to the proof of Theorem \ref{Th:2.4}. For any $\ve>0$ we will show that there is a compact set $K_\ve$ in $C_h$, which satisfies the condition (\ref{eq:3.22}). Using Lemma \ref{lem:3.8}, $u_{T+\theta}$ satisfies the condition (\ref{eq:3.24}), hence there exists a set $B_{\ve/3}$, such that $P\{B_\ve\}>1 - \frac{\ve}{3},$ and such that the set of functions 
\[
\{u_{T+\theta}(\omega),  \omega \in B_{\ve/3}\}
\]
is equicontinuous on $[-h,0]$. Next, choose $R_1>0$ such that
\begin{equation*}\label{eq:3.37}
    P\{\|u_T\|_{C_h}>R_1\} \leq  \frac{ \ve}{3}. 
\end{equation*}
Due to the condition (\ref{eq:2.7}), such choice is always possible. Therefore by Lemma \ref{lem:3.4} there is a compact set $K_{\frac{2\ve}{3}} \in C_h$ such that
\begin{equation}\label{eq:3.38}
  P\{g(u_{T+\theta})(\omega) \in K_{\frac{2\ve}{3}}\} \geq 1  - \frac{2 \ve}{3}. 
\end{equation}
Using the Markov property for $t>T$, for any Borel $K \subset C_h$ we have 
\begin{equation}\label{eq:3.39}
    P\{u_t \in K\} = \E(P(u_t \in K | \F_{t-1})) = \E(P(0,u_{t-T},T,K)) \geq \E(P(0,u_{t-T},T,K) \chi_{\{\|u_{t-T}\|^p_{C_h} \leq R_1\} }).
\end{equation}
 Now, using Lemma \ref{lem:3.7}, we can choose $r>R_1$ such that
\begin{equation}\label{eq:3.41}
 P\{z(\theta) \in K(r)\} \geq 1 - \frac{\ve}{3}.  
\end{equation}
It follows from (\ref{eq:3.23}), (\ref{eq:3.38}) and  \eqref{eq:3.41} there is a compact $K_{\ve} \subset C_h$ such that
\begin{equation*}\label{eq:3.42}
    P\{u_{T} \in K(\ve)\} \geq 1-\ve.
\end{equation*}
Setting $K=K_\ve$ in (\ref{eq:3.39}) we have
\[
P\{u_{t} \in K(\ve)\} \geq (1-\ve) P\{\|u_{t-T}\|^p_{C_h} \leq R_1\}.
\]
In view of the condition (\ref{eq:2.7}), the proof of Theorem \ref{Th:2.4} is complete. 

\section{Applications.}\label{sec4}  In this section we use the standard elliptic operator $A$ to illustrate our main result. We also provide explicit examples of non-Lipschitz nonlinearities $f$ and $\sigma$, for which the main results are applicable.

  Let $D$ be a bounded domain in $\R^d$ with $\partial D$ satisfying the Lyapunov condition, $H = L^2(D)$, and 
\begin{equation*}\label{eq:4.1}
    Au = \sum_{i,j=1}^{d} (a_{i,j}(x) u_{x_i})_{x_j} = {\rm{div}} (a(x) \nabla u).
\end{equation*}
Here $a_{i,j}$ are Holder continuous coefficients with the Holder exponent $\beta \in (0,1)$, symmetric, bounded, and satisfies the elliptic condition
\[
\sum_{i,j=1}^d a_{i,j} \eta_i \eta_j \geq C_0 \|\eta\|^2, \eta \in \R^d,
\]
for some $C_0>0.$ Let $e_n(x)$ be orthonormal basis in $H$, such that $e_n \in L^{\infty}(D)$, and $\sup_{n}\|e_n\|_{L^{\infty}(D)}<\infty.$ Introduce the covariance operator $Q \in \mathcal{L}(H)$ such that $Q$ is non-negative, $Tr(Q)< \infty$ and $Q e_n = \lambda_n e_n$. This enables us to define
\[
W(t):= \sum_{i=1}^{\infty} \sqrt{\lambda_i} \beta_i(t) e_i(x), t \geq 0,
\]
which is a $Q$-Wiener process for $t \geq 0$ with values in $L^2(D)$. Denote $U := Q^{\frac{1}{2}}(L^2(D))$. It follows from \cite{26}, Lemma 2.2, that $U \in L^{\infty}(D)$. Following \cite{26}, introduce the multiplication operator $\Psi:U \to H$ as follows. For fixed $\f \in L^2(D)$, let $\Psi(\f) = \f \cdot \psi$ for $\psi \in U$. Since $\f \in L^2(D)$ and $\psi \in L^{\infty}(D)$, the operator $\Psi$ is well defined, and hence $\Psi \circ Q^{1/2} : L^2(D) \to L^2(D)$ defines a Hilbert-Schmidt operator, with
\[
\|\Psi \circ Q^{1/2}\|^2_{\L} \leq Tr(Q) \sup_{n} \|e_n\|^2_{\infty} \|\f\|^2_{L^2(D)}.
\]
Our main object of interest in this section is the following delay equation:
\begin{equation}\label{eq:4.2}
    d[u(t,x) + \int_{D}b(x,u(t-h,y),y)dy] = [{\rm div}(a(x) \nabla u(t,x)) + f(u(t-h,x))] dt + \sigma(u(t-h,x)) dW(t) 
\end{equation}
for $t>0$, with $u(t,x)=\f(t,x)$ for $t \in [-h,0]$ and $u(t,x) = 0$ for $x \in \partial D, t \geq 0.$ Here $b(x,z,y):\R^d \times \R^1 \times \R^d \to \R$, $f: \R \to \R$ and $\sigma: \R \to \R.$ Introduce the mapping 
\[
g(\f)(x):= \int_{D}b(x,\f(-h),y) dy
\]
as a map from $C_h$ to $L^2(D)$. Then the problem (\ref{eq:4.2}) can be viewed in the abstract form (\ref{eq:1.1}), with $D(A) = H^2(D) \cap H_0^1(D)$.

\begin{corollary}
\begin{itemize}
\item[{\bf[i]}] Suppose $f$ and $\sigma$  satisfy {\bf(H2)} (as Nemytskii maps), and the initial condition $\varphi$ satisfies {\bf(H4)}. In addition, assume the function $b$ is continuous with respect to all of its variables, and there exist constants $A>0$ and $M_g>0$ such that 
\[
|b(x,0,y)|+|\nabla_x b(x,z,y)| \leq A 
\]
and
\[
|b(x,z_1,y) - b(x,z_2,y)| + |\nabla_x b(x,z_1,y) - \nabla_x b(x,z_2,y)| \leq M_g |z_1-z_2|
\]
for all $x,y \in D$, $z_1,z_2 \in \R$ and \begin{equation}
2 M_g^2 \ {\rm meas}^2(D)<1.
\end{equation}
Then \eqref{eq:4.2} has a unique mild solution. 
\item[{\bf[ii]}] If, in addition, the functions $f$, $\sigma$ and $b$  are bounded, there is an invariant measure for \eqref{eq:4.2}.
\end{itemize}

\end{corollary}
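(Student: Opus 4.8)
The plan is to verify the abstract hypotheses \textbf{(H1)--(H4)} for the concrete problem \eqref{eq:4.2}, so that Theorem \ref{Th:2.1} delivers part \textbf{[i]} and Theorem \ref{Th:2.4} delivers part \textbf{[ii]}. Condition \textbf{(H1)} is the classical statement that a uniformly elliptic divergence-form operator with H\"older coefficients and Dirichlet boundary conditions on a Lyapunov domain generates an analytic semigroup with spectrum bounded away from zero and compact resolvent; this is standard and I would simply cite the relevant elliptic theory. Condition \textbf{(H4)} is inherited directly from the hypothesis on $\varphi$. The interpretation of $f$ and $\sigma$ as Nemytskii operators on $H=L^2(D)$ satisfying \textbf{(H2)}, together with the multiplication-operator construction of $\sigma$ into $\L$ described just before the statement, is assumed, so the only genuine work is to check \textbf{(H3)} for the neutral term $g$.

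The heart of the proof is therefore to show that $g(\varphi)(x)=\int_D b(x,\varphi(-h),y)\,dy$ maps $C_h$ into $H_\alpha$ for a suitable $\alpha$ and is Lipschitz there with constant $M_g$. First I would fix the choice $\alpha=\tfrac12$, so that $H_{1/2}=D((-A)^{1/2})$ coincides (with equivalent norms) with $H_0^1(D)$ for the Dirichlet operator $A$; this identifies the fractional-power norm $\|\cdot\|_{1/2}$ with the $H^1$-norm, controlled by the $L^2$-norms of the function and its gradient. The Lipschitz estimate then reduces to bounding, for $\varphi_1,\varphi_2\in C_h$,
\[
\|g(\varphi_1)-g(\varphi_2)\|_{H_0^1}^2
= \int_D\Big|\int_D \big(b(x,\varphi_1(-h),y)-b(x,\varphi_2(-h),y)\big)\,dy\Big|^2 dx
+\int_D\Big|\int_D \nabla_x(\cdots)\,dy\Big|^2 dx.
\]
Applying the Cauchy--Schwarz inequality in $y$ together with the two hypotheses on $b$ and $\nabla_x b$, each inner integral is bounded by $M_g\,{\rm meas}(D)^{1/2}\big(\int_D|\varphi_1(-h)-\varphi_2(-h)|^2\big)^{1/2}$, and squaring and integrating in $x$ produces the factor $M_g^2\,{\rm meas}^2(D)$ for each of the two terms. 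Summing the value and gradient contributions yields $\|g(\varphi_1)-g(\varphi_2)\|_{H_{1/2}}\le \sqrt{2}\,M_g\,{\rm meas}(D)\,\|\varphi_1-\varphi_2\|_{C_h}$, and the smallness condition $2M_g^2\,{\rm meas}^2(D)<1$ is exactly what makes the effective Lipschitz constant $\sqrt{2M_g^2\,{\rm meas}^2(D)}$ strictly less than $1$, as \textbf{(H3)} demands. The analogous bound with $b(x,0,y)$ in place of a difference shows $g$ takes values in $H_{1/2}$.

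With \textbf{(H1)--(H4)} in hand, part \textbf{[i]} is immediate from Theorem \ref{Th:2.1}. For part \textbf{[ii]}, under the additional boundedness of $f$, $\sigma$ and $b$, the remaining task is to verify the boundedness-in-probability condition \eqref{eq:2.7}, after which Theorem \ref{Th:2.4} applies. I would establish \eqref{eq:2.7} by deriving a uniform-in-$t$ a priori bound on $\E\|u_t\|_{C_h}^p$: writing the mild formulation, the semigroup decay estimate \eqref{eq:exp_est} together with Lemma \ref{L:2.2} controls the deterministic terms, the singular neutral term is handled via the fractional-power splitting exactly as in the proof of Lemma \ref{lem:3.1} using $\|(-A)^{1-\alpha}S(t-s)\|\le C_{1-\alpha}(t-s)^{-(1-\alpha)}e^{-\delta(t-s)}$, and the stochastic convolution is bounded by Lemma \ref{L:2.3}; boundedness of $f$, $\sigma$ and $b$ replaces the linear-growth terms by constants, so the exponential factors $e^{-\delta t}$ force a uniform bound independent of $t$. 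Chebyshev's inequality then converts this into \eqref{eq:2.7}. I expect the main obstacle to be the neutral term in this last estimate: unlike the finite-horizon bound \eqref{eq:3.20}, one needs the $e^{-\delta t}$ decay to survive the $(-A)^{1-\alpha}$ factor and the convolution, so the integrals $\int_0^t (t-s)^{-(1-\alpha)}e^{-\delta(t-s)}\,ds$ must be shown bounded uniformly in $t$, which is where the strict dissipativity ${\rm Re}\,\sigma(-A)>\delta>0$ and the condition $M_g<1$ jointly do the work.
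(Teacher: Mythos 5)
Your proposal follows essentially the same route as the paper: verify \textbf{(H1)} by standard elliptic theory, check \textbf{(H3)} with $\alpha=\tfrac12$ by identifying $H_{1/2}$ with $H_0^1(D)$ and using Cauchy--Schwarz on the inner integrals to get the Lipschitz constant $\sqrt{2}\,M_g\,{\rm meas}(D)<1$, and for \textbf{[ii]} derive a uniform-in-$t$ second-moment bound from the exponential semigroup decay and the boundedness of the coefficients, concluding via Chebyshev. This matches the paper's argument in both structure and detail.
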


\begin{proof}
Let us verify that the conditions of Theorem \ref{Th:2.1} hold.  Since $(-A)$ is a sectorial self-adjoint operator, it follows, e.g. from \cite{27}, p.335 that $\sigma(-A) > \sigma_0>0$, and $(-A)^{-1}$ is compact. Therefore the assumption {\bf (H1)} holds. Furthermore, following \cite{28} introduce the interpolation space $D_A(\frac{1}{2}, 2) = H_0^1$. Therefore by Proposition A.17 \cite{19} $D_A(\frac{1}{2}, 2)$ is isomorphic to $D((-A)^{\frac{1}{2}})$, and
\begin{equation*}\label{eq:4.3}
\|g(\f)\|^2_{\frac{1}{2}} = \|g(\f)\|^2_{H_0^1} = \int_{D} |(g  (\f)(x)|^2 dx + \int_{D} |\nabla g(\f)(x)|^2 dx.
\end{equation*}
But 
\begin{equation*}\label{eq:4.4}
\int_{D} |g(\f)(x)|^2 dx = \int_{D} dx \left(\int_{D} b(x,\f(-h,y),y) dy\right)^2 \leq L^2 {\rm meas}^2(D)\left( \int_{D} |\f(-h,y)|^2 dy +1 \right)<\infty.
\end{equation*}
By Lebesgue Theorem on differentiation of integrals,
\[
\nabla g(\f)(x) = \int_{D} \nabla_x b(x,\f(-h,y),y) dy,
\]
hence
\[
\int_{D}|\nabla g(\f)(x)|^2 dx < \infty.
\]
Finally, 
\begin{eqnarray*}
 & & \|g(\f_1) - g(\f_2)\|^2_{\frac{1}{2}} = \int_{D} |g(\f_1)(x)-g(\f_2)(x)|^2 dx + \int_{D} |\nabla_x g(\f_1)(x)- \nabla_x g(\f_2)(x)|^2 dx\\
& &\leq \int_{D} dx \left(\int_{D}|b(x,\f_1(-h,y),y) - b(x,\f_2(-h,y),y)| dy \right)^2 \\
& & + \int_{D} dx \left(\int_{D}|\nabla_x b(x,\f_1(-h,y),y) - \nabla_x b(x,\f_2(-h,y),y)| dy \right)^2\\
& & \leq 2 {\rm meas}^2(D) M_g^2 \sup_{\theta \in [-h,0]} \int_{D} |\f_1(\theta,y) - \f_2(\theta,y)|^2 dy = 2 {\rm meas}^2(D) M_g^2  \|\f_1 - \f_2\|^2_{C_h},
\end{eqnarray*}
which implies that the condition {\bf (H3)} holds. Thus {\bf[i]} follows by Theorem  \ref{Th:2.1}. \\

In order to derive {\bf[ii]}, it remains to verify the condition (\ref{eq:2.7}) and apply Theorem \ref{Th:2.4}.  To this end, the spectral condition on $A$ implies the exponential contraction property of the semigroup 
\[
\|S(t)\| \leq K e^{-\sigma_0 t}, t \geq 0.
\]
Assume
\[
|f|^2 + |\sigma|^2 + |b|^2 + |\nabla_x b|^2 \leq C.
\]
Then by  Lemma \ref{L:2.4}
\[
\sup_{t \geq 0} \E \|u_t\|^2_{C_h} \leq \E \|\f\|^2_{C_h} + \sup_{t \geq 0}\E \|u(t)\|^2_{L^2(D)},
\]
which implies 
\begin{eqnarray*}
& & \E \|u(t)\|^2 \leq 5 \E \|S(t)(\f(0)+g(\f))\|^2 + 5 \E\|g(u_t)\|^2 + 5 \E \left\|\int_0^t A S(t-s)g(u_s) ds\right\|^2 \\
& &+ 5 \E \left\|\int_0^t S(t-s)f(u_s) ds\right\|^2 + 5 \E \left\|\int_0^t S(t-s)\sigma(u_s) d W(s)\right\|^2 \leq 5 K^2 e^{-2\sigma_0 t}  \E \|\f(0)+g(\f)\|^2 \\
& & C^2 {\rm meas}^2(D) + \E \left(\int_0^t \left\| A^{1/2} S(t-s)\| \|g(u_s)\|_{1/2}\right\| ds\right)^2  \\
& & K^2 \int_0^t e^{-\sigma_0(t-s)} C ds + K^2\sum_{k=1}^{\infty} \lambda_k \sup_{n\geq 1}\|e_n\|^2_{\infty} \int_0^t e^{-\sigma_0(t-s)}  ds.
\end{eqnarray*}
Since $ \|g(u_s)\|_{1/2} < \infty$ and $\|A^{1/2} S(t-s)\|\leq C_{1/2}(t-s)^{-\frac{1}{2}}e^{-\sigma_0(t-s)}$, we have $\sup_{t \geq 0}\E \|u(t)\|^2 <\infty$. Therefore, the condition (\ref{eq:2.7}) holds by Chebyshev's inequality. 
\end{proof}
 \begin{example}
Suppose $meas(D) = 1$. For $x \geq 0$ let
\begin{equation}\label{choice of N}
N(x) :=
\begin{cases}
0, \ x =0;\\
-x \ln(x), \ 0<x \leq e^{-2};\\
x + e^{-2}, \ x > e^{-2}.
\end{cases}
\end{equation}
In view of Remark \ref{Rem1}, this choice of $N$ satisfies the conditions (a) and (c) in {\bf(H2)}. For $p>2$ let
\begin{equation}\label{choice of f}
f(x) :=
\begin{cases}
0, \ x =0;\\
|x| p^{1/p} \left|\ln|x|\right|^{1/p}, \ 0<|x| \leq e^{-2};\\
\text{ any bounded Lipschitz function with } L \leq 1, \, |x| > e^{-2}.
\end{cases}
\end{equation}
We first claim that for all $x, y \in [0, e^{-2}]$ we have
\begin{equation}\label{difference}
|f(x) - f(y)|^p  \leq N(|x-y|^p)
\end{equation}
Indeed, since $f$ is concave on $[0, e^{-2}]$ and $f(0) = 0$, it is also sub-additive on this interval, i.e. $\forall a, b: f(a) + f(b) \geq f(a+b)$. The latter inequality means that if $0<x<y<e^{-2}$, we have
\[
f(x) + f(y-x) \geq f(y),
\]
or 
\[
(f(y)-f(x))^p \leq f^p(y-x) = N\left((y-x)^p\right).
\]
Using the same reasoning if $y<x$, the bound (\ref{difference}) follows. Since $f$ is even on $[-e^{-2}, e^{-2}]$ and Lipschitz outside of this interval, it is easy to see that in fact \eqref{difference} holds for all $x, y \in \R$. Note that since $meas(D) = 1$, the bound (\ref{difference}) is equivalent to the inequality
\begin{equation}\label{N}
\|f(\varphi_1) - f(\varphi_2)\|^p  \leq N(\|\varphi_1 - \varphi_2\|^p)
\end{equation}
which, in turn, yields {\bf(H2)} (b) for two {\bf constant} initial conditions $\varphi_{1}(t,x) \equiv\varphi_{1}$ and $\varphi_{2}(t,x) \equiv \varphi_{2}$, $x \in D$, $t \in [-h,0]$.  In order to obtain the estimate  {\bf(H2)} (b) for non-constant $\varphi_{1}$ and $\varphi_{2}$, we equi-partition $D$ into subdomains, approximate $\varphi_{1}$ and $\varphi_{2}$ with constants in each subdomain, and use \eqref{difference} in conjunction with the concavity of $N$ to get \eqref{N} for piecewise constant functions. The desired result will then follow if we take the limit as the number of partitions goes to infinity. \\

Thus, the equation \eqref{eq:4.2} with a non-Lipschitz $f$, given by \eqref{choice of f}, is an example of a well-posed neutral type stochastic functional-differential equation, which admits the existence of invariant measure. The same conclusion will hold if we choose $\sigma \equiv f$, given by \eqref{choice of f}. The only difference in this case that we need to slightly modify the choice of $N$ in \eqref{choice of N} by taking $\tilde{N}:= \sum_{k=1}^{\infty} \lambda_k \sup_{n}\|e_n\|_{\infty}^2 N$.
\end{example}

\section*{Acknowledgement} The research of Oleksandr Misiats was supported by Simons Collaboration
Grant for Mathematicians No. 854856. The work of Oleksandr Stanzhytskyi was partially supported by the National Research Foundation of Ukraine No. F81/41743 and Ukrainian Government Scientific Research Grant No. 210BF38-01.

\bibliography{bibliogratyit}
\end{document}